\documentclass[11pt]{article}
\usepackage{amsmath}
\usepackage{amssymb}
\usepackage{amsthm}
\usepackage[usenames]{color}
\usepackage{amscd}
\usepackage{dsfont}
\usepackage{indentfirst}

\usepackage[colorlinks=true,linkcolor=blue,filecolor=red,
citecolor=webgreen]{hyperref}
\definecolor{webgreen}{rgb}{0,.5,0}

\hoffset=-.7truein \voffset=-.6truein \textwidth=160mm
\textheight=220mm

\def\C{{\mathds{C}}}

\def\N{{\mathds{N}}}

\def\1{{\bf 1}}

\newtheorem{theorem}{Theorem}[section]

\newtheorem{cor}{Corollary}

\newtheorem{prop}[theorem]{Proposition}

\begin{document}

\title{\bf On multivariable averages of divisor functions}
\author{L\'aszl\'o T\'oth and Wenguang Zhai}
\date{}
\maketitle

\centerline{Journal of Number Theory {\bf 192} (2018), 251--269}

\begin{abstract} We deduce asymptotic formulas for the sums
$\sum_{n_1,\ldots,n_r\le x} f(n_1\cdots n_r)$ and \\
$\sum_{n_1,\ldots,n_r\le x} f([n_1\cdots n_r])$, where $r\ge 2$ is a
fixed integer, $[n_1,\ldots,n_r]$ stands for the least common
multiple of the integers $n_1,\ldots,n_r$ and $f$ is one of the
divisor functions $\tau_{1,k}(n)$ ($k\ge 1$), $\tau^{(e)}(n)$ and $\tau^*(n)$.
Our formulas refine and generalize a result of Lelechenko (2014).
A new generalization of the Busche-Ramanujan
identity is also pointed out.
\end{abstract}

{\sl 2010 Mathematics Subject Classification}: 11A25, 11N37

{\sl Key Words and Phrases}: divisor function, least common multiple, multiple Dirichlet series, average order, error term,
Busche-Ramanujan identity

\section{Introduction}

Let $\tau_{1,k}(n)=\sum_{ab^k=n} 1$, where $k\in \N:=\{1,2,\ldots \}$ is a fixed integer. For $k=1$ this is the
divisor function $\tau(n)$. It is well known that
\begin{equation} \label{Dir_divisor}
\sum_{n\le x} \tau(n)= x\log x +(2\gamma-1)x + O(x^{\theta+\varepsilon}),
\end{equation}
for any $\varepsilon>0$, where $\gamma$ is Euler's constant and $1/4\le  \theta< 1/3$. The best
result up to date, namely $\theta \le 517/1648=0.3137\ldots$ is a due to Bourgain and Watt \cite{BouWat2017}.
Furthermore, for $k\ge 2$,
\begin{equation} \label{tau_1dim}
\sum_{n\le x} \tau_{1,k}(n)= \zeta(k) x + \zeta(1/k) x^{1/k} + O(x^{\theta_k+\varepsilon}),
\end{equation}
where $1/(2(k+1))\le  \theta_k\le 1/(k+2)$, which can be improved.
See, e.g., the book by Kr\"{a}tzel \cite[Ch.\ 5]{Kra1988}. Note that
$\theta_2 \le \frac{1057}{4785}= 0.2208\ldots$, which is a result of Graham and
Kolesnik \cite{GraKol1988}.

The exponential divisor function $\tau^{(e)}$ is multiplicative and defined by
$\tau^{(e)}(p^{\nu})=\tau(\nu)$ for every prime power $p^{\nu}$ ($\nu \in \N$).
It is known that
\begin{equation*}
\sum_{n\le x} \tau^{(e)}(n) = c_1 x + c_2 x^{1/2}+ O(x^{\lambda+\varepsilon}),
\end{equation*}
for every $\varepsilon >0$, where $c_1, c_2$ are computable constants and
$\lambda=\frac{1057}{4785}= 0.2208\ldots$, the error term being strongly related to the divisor function
$\tau_{1,2}$. See Wu \cite{Wu1995}.

Lelechenko \cite{Lel2014} proved, using a multidimensional Perron formula and the complex integration method that
\begin{equation} \label{tau_2dim}
\sum_{m,n\le x} \tau_{1,2}(mn)= A_2 x^2 + B_2 x^{3/2} + O(x^{10/7+\varepsilon}),
\end{equation}
where $A_2,B_2$ are constants and $10/7=1.4285\ldots$. He noted that in the case $k\ge 3$ the same method does not furnish the expected
asymptotic formula
\begin{equation} \label{tau_1k}
\sum_{m,n\le x} \tau_{1,k}(mn)= A_k x^2 + B_k x^{1+1/k} + O(x^{\alpha_k+\varepsilon}),
\end{equation}
since the obtained error term is larger than $x^{4/3}$, even under the Riemann hypothesis, and
absorbs the term $x^{1+1/k}$. It is also noted in paper \cite{Lel2014} that formula \eqref{tau_2dim} remains valid for the function
$\tau^{(e)}$ instead of $\tau_{1,2}$, due to the fact that $\tau^{(e)}(p^\nu)=\tau_{1,2}(p^\nu)$ for $\nu \in \{1,2,3,4\}$.

Another similar divisor function is $\tau^*(n)=2^{\omega(n)}$, representing the number of unitary divisors of $n$, which equals the number of
squarefree divisors of $n$. One has
\begin{equation*}
\sum_{n\le x} \tau^*(n) = \frac{6}{\pi^2}x \left(\log x + 2\gamma-1-\frac{2\zeta'(2)}{\zeta(2)}\right) + O(R(x)),
\end{equation*}
where $R(x)\ll x^{1/2}\exp(-c_0(\log x)^{3/5}(\log \log x)^{-1/5})$
with $c_0$ a positive constant. See \cite{SS1970}.

It is the goal of the present paper to improve the error term of
\eqref{tau_2dim} and to deduce formula \eqref{tau_1k} with a sharp error term.
More generally, we derive asymptotic formulas for the sums
$\sum_{n_1,\ldots,n_r\le x} \tau_{1,k}(n_1\cdots n_r)$ and
$\sum_{n_1,\ldots,n_r\le x} \tau_{1,k}([n_1,\ldots, n_r])$, where
$k\ge 1$ and $r\ge 2$ are fixed integers and $[n_1,\ldots n_r]$
stands for the least common multiple of $n_1,\ldots,n_r$.

Furthermore, we deduce similar asymptotic formulas concerning the divisor functions $\tau^{(e)}$ and $\tau^*$.
Our approach is based on the study of multiple Dirichlet series and the convolution method. A new
generalization of the Busche-Ramanujan identity is also pointed out.

We remark that asymptotic formulas for sums $\sum_{n_1,\ldots,n_r\le
x} f([n_1,\ldots,n_r])$, where $f$ belongs to a large class of
multiplicative functions, including $\sigma_t(n)=\sum_{d\mid n} d^t$
and $\phi_t(n)=\sum_{d\mid n} d^t\mu(n/d)$ with $t\ge 1/2$ were
established by Hilberdink and the first author \cite{HilTot2016}. However, those results
can not be applied for the divisor functions investigated in the present paper.

\section{Multiple Dirichlet series}

The Dirichlet series of a function $f:\N^r\to \C$ is given by
\begin{equation*}
D(f;s_1,\ldots,s_r)= \sum_{n_1,\ldots,n_r=1}^{\infty}
\frac{f(n_1,\ldots,n_r)}{n_1^{s_1}\cdots n_r^{s_r}}.
\end{equation*}

Similar to the one variable case, if $D(f;s_1,\ldots,s_r)$ is
absolutely convergent for $(s_1,\ldots,s_r)\in \C^r$, then it is
absolutely convergent for every $(z_1,\ldots,z_r)\in \C^r$ with $\Re
z_j \ge \Re s_j$ ($1\le j\le r$).

The Dirichlet convolution of the functions $f,g:\N^r\to \C$ is defined by
\begin{equation*}
(f*g)(n_1,\ldots,n_r)= \sum_{d_1\mid n_1, \ldots, d_r\mid n_r}
f(d_1,\ldots,d_r) g(n_1/d_r, \ldots, n_r/d_r).
\end{equation*}

If $D(f;s_1,\ldots,s_r)$ and $D(g;s_1,\ldots,s_r)$ are
absolutely convergent, then
$D(f*g;z_1,\ldots,z_r)$ is also absolutely convergent and
\begin{equation*}
D(f*g;s_1,\ldots,s_r) = D(f;s_1,\ldots,s_r) D(g;s_1,\ldots,s_r).
\end{equation*}

We recall that a nonzero arithmetic function of $r$ variables $f:\N^r\to \C$ is said to be multiplicative if
\begin{equation*}
f(m_1n_1,\ldots,m_rn_r)= f(m_1,\ldots,m_r) f(n_1,\ldots,n_r)
\end{equation*}
holds for any $m_1,\ldots,m_r,n_1,\ldots,n_r\in \N$ such that $(m_1\cdots m_r,n_1\cdots n_r)=1$.
If $f$ is multiplicative, then it is determined by the values
$f(p^{\nu_1},\ldots,p^{\nu_r})$, where $p$ is prime and
$\nu_1,\ldots,\nu_r\in \N \cup \{0\}$. More exactly, $f(1,\ldots,1)=1$ and
for any $n_1,\ldots,n_r\in \N$,
\begin{equation*}
f(n_1,\ldots,n_r)= \prod_p f(p^{\nu_p(n_1)}, \ldots,p^{\nu_p(n_r)}),
\end{equation*}
where we use the notation $n=\prod_p p^{\nu_p(n)}$ for the prime power factorization of $n\in \N$, the product being over the
primes $p$ and all but a finite number of the exponents $\nu_p(n)$ are zero. If $r=1$, i.e., in the case of functions of a
single variable we reobtain the familiar notion of multiplicativity.

If $f$ is multiplicative, then its Dirichlet series can be expanded into a (formal)
Euler product, that is,
\begin{equation} \label{Euler_product}
D(f;s_1,\ldots,s_r)=  \prod_p \sum_{\nu_1,\ldots,\nu_r=0}^{\infty}
\frac{f(p^{\nu_1},\ldots, p^{\nu_r})}{p^{\nu_1s_1+\cdots +\nu_r
s_r}},
\end{equation}
the product being over the primes $p$. More exactly, for $f$ multiplicative, the series
$D(f;s_1,\ldots,s_r)$ is absolutely convergent if and only if
\begin{equation*}
\sum_p \sum_{\substack{\nu_1,\ldots,\nu_r=0\\ \nu_1+\cdots +\nu_r \ge 1}}^{\infty}
\frac{|f(p^{\nu_1},\ldots, p^{\nu_r})|}{p^{\nu_1 \Re s_1+\cdots +\nu_r
\Re s_r}} < \infty
\end{equation*}
and in this case equality \eqref{Euler_product} holds. See Delange \cite[Lemma 2]{Del1969} for a proof of this property in the case
of two variables. Also see the survey paper \cite{Tot2014} on general accounts concerning
multiplicative functions of several variables.

Now we consider the function $\tau_{1,k}$.

\begin{prop} \label{Prop_Dir_ser_tau_prod} Let $r\ge 2$, $k\ge 1$ and let $s_1,\ldots,s_r\in \C$ with $\Re s_j >1$ \textup{($1\le j\le r$)}. Then
\begin{equation*}
\sum_{n_1,\ldots, n_r=1}^{\infty} \frac{\tau_{1,k}(n_1\cdots n_r)}{n_1^{s_1}\cdots n_r^{s_r}} = \zeta(s_1)\zeta(ks_1)\cdots
\zeta(s_r)\zeta(ks_r) F_{r,k}(s_1,\ldots,s_r),
\end{equation*}
where
\begin{equation*}
F_{r,k}(s_1,\ldots,s_r) = \sum_{n_1,\ldots, n_r=1}^{\infty} \frac{f_{r,k}(n_1,\ldots, n_r)}{n_1^{s_1}\cdots n_r^{s_r}}
\end{equation*}
is absolutely convergent provided that $\Re s_j >0$ \textup{($1\le j\le r$)} and $\Re(s_j+s_{\ell})>1$ \textup{($1\le j<\ell \le r$)}.
\end{prop}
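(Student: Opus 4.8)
The plan is to exploit multiplicativity. The $r$-variable arithmetic function $g(n_1,\dots,n_r):=\tau_{1,k}(n_1\cdots n_r)$ is multiplicative: if $(n_1\cdots n_r,m_1\cdots m_r)=1$, then $n_1\cdots n_r$ and $m_1\cdots m_r$ are coprime, so by the multiplicativity of $\tau_{1,k}$ we get $g(n_1m_1,\dots,n_rm_r)=g(n_1,\dots,n_r)\,g(m_1,\dots,m_r)$. Since $\tau_{1,k}(n)\le\tau(n)$, the series $D(g;s_1,\dots,s_r)$ converges absolutely for $\Re s_j>1$ $(1\le j\le r)$, and by \eqref{Euler_product} it factors there as $\prod_p P(p^{-s_1},\dots,p^{-s_r})$, where
\[
P(x_1,\dots,x_r)=\sum_{\nu_1,\dots,\nu_r\ge 0}\tau_{1,k}\bigl(p^{\nu_1+\cdots+\nu_r}\bigr)x_1^{\nu_1}\cdots x_r^{\nu_r}.
\]
Crucially $P$ is independent of $p$, because $\tau_{1,k}(p^m)=\lfloor m/k\rfloor+1$ for every prime $p$.

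Next I would evaluate $P$ in closed form using the two one-variable identities $\sum_{m\ge 0}\bigl(\lfloor m/k\rfloor+1\bigr)t^m=\bigl((1-t)(1-t^k)\bigr)^{-1}$ and $\sum_{m\ge 0}\bigl(\sum_{\nu_1+\cdots+\nu_r=m}x_1^{\nu_1}\cdots x_r^{\nu_r}\bigr)t^m=\prod_{j=1}^r(1-x_jt)^{-1}$. Combining them --- say, by partial fractions in $t$ when $x_1,\dots,x_r$ are pairwise distinct, and then extending by analytic continuation --- gives
\[
P(x_1,\dots,x_r)=\sum_{j=1}^r\frac{x_j^{\,r-1}}{(1-x_j)(1-x_j^k)\prod_{i\ne j}(x_j-x_i)}=\left(\prod_{j=1}^r\frac{1}{(1-x_j)(1-x_j^k)}\right)H(x_1,\dots,x_r),
\]
where $H(x_1,\dots,x_r):=\sum_{j=1}^r x_j^{\,r-1}\prod_{\ell\ne j}(1-x_\ell)(1-x_\ell^k)\big/\prod_{i\ne j}(x_j-x_i)$. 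The apparent poles of $H$ along $x_i=x_j$ cancel, so $H$ is a symmetric polynomial with $H(0,\dots,0)=1$. Since $\prod_p\prod_{j=1}^r\bigl((1-p^{-s_j})(1-p^{-ks_j})\bigr)^{-1}=\prod_{j=1}^r\zeta(s_j)\zeta(ks_j)$ for $\Re s_j>1$, this yields the stated factorization with $F_{r,k}(s_1,\dots,s_r)=\prod_p H(p^{-s_1},\dots,p^{-s_r})$, and $f_{r,k}$ is the multiplicative function whose value $f_{r,k}(p^{\nu_1},\dots,p^{\nu_r})$ is the coefficient of $x_1^{\nu_1}\cdots x_r^{\nu_r}$ in $H$. (As a sanity check, for $r=2,k=1$ one finds $H=1-x_1x_2$, i.e.\ $F_{2,1}(s_1,s_2)=\zeta(s_1+s_2)^{-1}$, in agreement with the known formula for $\sum_{m,n}\tau(mn)m^{-s_1}n^{-s_2}$.)

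The last step, which I expect to be the real point, is to pin down where $F_{r,k}$ converges absolutely. Putting every variable except $x_j$ equal to $0$ in $P$ gives $\sum_{\nu\ge0}\tau_{1,k}(p^\nu)x_j^\nu=\bigl((1-x_j)(1-x_j^k)\bigr)^{-1}$, hence $H(0,\dots,0,x_j,0,\dots,0)=1$; therefore every monomial occurring in $H-1$ has at least two positive exponents. For such a monomial $x_1^{\nu_1}\cdots x_r^{\nu_r}$, say with $\nu_a,\nu_b\ge1$ and $a\ne b$, and with $x_j=p^{-s_j}$, one has $\bigl|x_1^{\nu_1}\cdots x_r^{\nu_r}\bigr|=p^{-(\nu_1\Re s_1+\cdots+\nu_r\Re s_r)}\le p^{-\Re(s_a+s_b)}$ because $\Re s_j>0$. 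Consequently $\bigl|H(p^{-s_1},\dots,p^{-s_r})-1\bigr|\ll_{r,k}p^{-\sigma}$ with $\sigma:=\min_{1\le i<\ell\le r}\Re(s_i+s_\ell)$, the implied constant being the sum of the absolute values of the coefficients of $H$. If $\Re(s_i+s_\ell)>1$ for all $i<\ell$, then $\sigma>1$, so $\sum_p\bigl|H(p^{-s_1},\dots,p^{-s_r})-1\bigr|<\infty$; together with $\Re s_j>0$ and the absolute convergence criterion recalled above, this shows that $F_{r,k}$ converges absolutely in the claimed region. The crux of the whole argument is the observation that $\tau_{1,k}(p^m)=\lfloor m/k\rfloor+1$ has, as a function of a single prime-exponent, \emph{precisely} the generating function $\bigl((1-t)(1-t^k)\bigr)^{-1}$ built from the extracted factor $\zeta(s_j)\zeta(ks_j)$; this makes all the pure-power terms of $P$ disappear from $F_{r,k}$, leaving only cross terms between distinct variables, which is exactly why the conditions $\Re(s_j+s_\ell)>1$ appear.
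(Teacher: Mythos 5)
Your proof is correct, and its overall architecture is the same as the paper's: expand the Euler product of the multiplicative $r$-variable function, pull out $\prod_j\zeta(s_j)\zeta(ks_j)$, show that the local factor of $F_{r,k}$ contains no pure-power terms $p^{-\ell s_j}$, and bound the remaining cross terms by $p^{-\Re(s_a+s_b)}$ using $\Re s_j>0$. Where you differ is in the middle step: the paper verifies the vanishing of the coefficient of $p^{-\ell s_j}$ by a direct inclusion--exclusion computation with $\lfloor\ell/k\rfloor$, $\lfloor(\ell-1)/k\rfloor$, $\lfloor(\ell-k)/k\rfloor$, $\lfloor(\ell-k-1)/k\rfloor$, whereas you obtain it structurally from the generating-function identity $\sum_{m\ge0}(\lfloor m/k\rfloor+1)t^m=((1-t)(1-t^k))^{-1}$ after specializing all but one variable to zero. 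Your partial-fraction evaluation of the local factor is more than is strictly needed, but it buys something the paper leaves implicit: it shows that the local factor $H$ of $F_{r,k}$ is a \emph{polynomial} of degree bounded independently of $p$, which makes the uniformity in $p$ of the bound $|H(p^{-s_1},\ldots,p^{-s_r})-1|\ll p^{-\sigma}$ (and hence the convergence of $\sum_p$) completely transparent; the paper instead writes the local factor as an a priori infinite series $1+\sum_{\#A\ge2}c_{\nu_1,\ldots,\nu_r}p^{-\nu_1s_1-\cdots-\nu_rs_r}$ and asserts convergence without estimating the $c$'s. Your closed form also immediately reproduces the explicit $r=2$ formula of Proposition \ref{Prop_Dir_series_r_2} and the $k=1$ identity quoted after Proposition \ref{Prop_Dir_ser_tau_prod}, which is a nice cross-check.
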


We remark that the following exact identity, valid for $k=1$ was
proved by the first author \cite[Eq.\ (4.2)]{Tot2013}:

\begin{equation*}
\sum_{n_1,\ldots,n_r=1}^{\infty} \frac{\tau(n_1\cdots
n_r)}{n_1^{s_1}\cdots n_r^{s_r}}
\end{equation*}
\begin{equation*}
= \zeta^2(s_1)\cdots \zeta^2(s_r) \prod_p \left(1+ \sum_{j=2}^r
(-1)^{j-1} (j-1) \sum_{1\le i_1< \cdots < i_j\le r}
\frac1{p^{s_{i_1}+\cdots +s_{i_j}}}\right),
\end{equation*}
where the infinite product is absolutely convergent if $\Re(s_{i_1}+\cdots +s_{i_j})>1$
($1\le i_1< \cdots <i_j \le r$ with $2\le j\le r$).

In the case of two variables, i.e., $r=2$ and $k\ge 1$ arbitrary we
have the following explicit formula:

\begin{prop} \label{Prop_Dir_series_r_2} Let $k\ge 1$ and let $s_1,s_2\in \C$ with $\Re s_1, \Re s_2 >1$, Then
\begin{equation*}
\sum_{n_1,n_2=1}^{\infty} \frac{\tau_{1,k}(n_1n_2)}{n_1^{s_1}n_2^{s_2}} = \zeta(s_1)\zeta(ks_1)\zeta(s_2)\zeta(ks_2) F_k(s_1,s_2)
\end{equation*}
where
\begin{equation*}
F_k(s_1,s_2) = \prod_p \left(1+ \sum_{j=1}^{k-1} \frac1{p^{js_1+(k-j)s_2}} - \sum_{j=1}^k \frac1{p^{js_1+(k+1-j)s_2}} \right)
\end{equation*}
is absolutely convergent if $\Re(js_1+(k-j)s_2)>1$ \textup{($1\le j\le k-1$)} and $\Re(js_1+(k+1-j)s_2)>1$ \textup{($1\le j\le k$)}.
\end{prop}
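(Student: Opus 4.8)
The plan is to exploit that $\tau_{1,k}$ is multiplicative as a function of one variable, so that the two-variable function $(n_1,n_2)\mapsto \tau_{1,k}(n_1n_2)$ is multiplicative in the sense recalled above. For $\Re s_1,\Re s_2>1$ the double series converges absolutely (by comparison with $\sum (n_1n_2)^{\varepsilon}n_1^{-\sigma_1}n_2^{-\sigma_2}$, using $\tau_{1,k}(n)\le \tau(n)\ll_{\varepsilon} n^{\varepsilon}$), hence by the Euler product criterion for multiplicative functions of several variables (Delange \cite[Lemma 2]{Del1969}) it factors as
\begin{equation*}
\sum_{n_1,n_2=1}^{\infty} \frac{\tau_{1,k}(n_1n_2)}{n_1^{s_1}n_2^{s_2}} = \prod_p L_p(s_1,s_2), \qquad L_p(s_1,s_2):=\sum_{\nu_1,\nu_2=0}^{\infty}\frac{\tau_{1,k}(p^{\nu_1+\nu_2})}{p^{\nu_1s_1+\nu_2s_2}}.
\end{equation*}
Since $\tau_{1,k}(p^{\nu})=\#\{(a,b)\in(\N\cup\{0\})^2 : a+kb=\nu\}=\lfloor \nu/k\rfloor+1$, the whole problem reduces to evaluating the local factor $L_p$ in closed form.

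First I would set $x=p^{-s_1}$ and $y=p^{-s_2}$ (so $|x|,|y|<1$ and all rearrangements below are legitimate), collect the terms of $L_p$ according to the value $m=\nu_1+\nu_2$, and use the elementary identity $\sum_{\nu_1+\nu_2=m}x^{\nu_1}y^{\nu_2}=\frac{x^{m+1}-y^{m+1}}{x-y}$, valid for $x\neq y$. Combined with the one-variable generating function $\sum_{m\ge 0}(\lfloor m/k\rfloor+1)z^m=\sum_{a,b\ge 0}z^{a+kb}=\frac1{(1-z)(1-z^k)}$, this yields
\begin{equation*}
L_p(s_1,s_2)=\frac1{x-y}\left(\frac{x}{(1-x)(1-x^k)}-\frac{y}{(1-y)(1-y^k)}\right),
\end{equation*}
the excluded case $x=y$ being then covered by continuity, as all quantities involved are analytic on the polydisc $|x|,|y|<1$.

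Next I would multiply through by $(1-x)(1-x^k)(1-y)(1-y^k)$, which are exactly the local Euler factors of $\zeta(s_1)\zeta(ks_1)\zeta(s_2)\zeta(ks_2)$, so that it only remains to verify the polynomial identity
\begin{equation*}
\frac{x(1-y)(1-y^k)-y(1-x)(1-x^k)}{x-y}=1+\sum_{j=1}^{k-1}x^jy^{k-j}-\sum_{j=1}^{k}x^jy^{k+1-j}.
\end{equation*}
Expanding the numerator gives $(x-y)+xy(x^{k-1}-y^{k-1})-xy(x^k-y^k)$, and dividing each term by $x-y$ via $\frac{x^{\ell}-y^{\ell}}{x-y}=\sum_{i=0}^{\ell-1}x^iy^{\ell-1-i}$ produces exactly the right-hand side after a shift of the summation index. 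This elementary but somewhat fiddly computation is the heart of the proof, and the only step I expect to require genuine care; everything else is bookkeeping. Taking the product over all primes then gives the stated factorization with $F_k(s_1,s_2)=\prod_p\bigl(1+\sum_{j=1}^{k-1}p^{-js_1-(k-j)s_2}-\sum_{j=1}^{k}p^{-js_1-(k+1-j)s_2}\bigr)$.

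Finally, for the convergence statement, writing $F_k=\prod_p(1+g_p)$ with $g_p$ the displayed finite sum, one has
\begin{equation*}
\sum_p|g_p|\le \sum_{j=1}^{k-1}\sum_p p^{-\Re(js_1+(k-j)s_2)}+\sum_{j=1}^{k}\sum_p p^{-\Re(js_1+(k+1-j)s_2)},
\end{equation*}
and each inner sum over $p$ converges as soon as the corresponding exponent exceeds $1$. Hence the Euler product defining $F_k$ converges absolutely whenever $\Re(js_1+(k-j)s_2)>1$ for $1\le j\le k-1$ and $\Re(js_1+(k+1-j)s_2)>1$ for $1\le j\le k$, which in particular provides the holomorphic continuation of $F_k$ to that wider region.
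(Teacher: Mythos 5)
Your proof is correct and takes essentially the same route as the paper: both reduce to a closed-form evaluation of the local Euler factor $\sum_{\nu_1,\nu_2\ge 0}\bigl(\lfloor(\nu_1+\nu_2)/k\rfloor+1\bigr)x^{\nu_1}y^{\nu_2}$ with $x=p^{-s_1}$, $y=p^{-s_2}$, arriving at the same rational function and the same final polynomial identity. The only (cosmetic) difference is that the paper organizes the double sum by the value of $\lfloor(\nu_1+\nu_2)/k\rfloor$ and nested geometric series, whereas you group by $m=\nu_1+\nu_2$ and invoke $\sum_{\nu_1+\nu_2=m}x^{\nu_1}y^{\nu_2}=(x^{m+1}-y^{m+1})/(x-y)$ together with the one-variable generating function of $\lfloor m/k\rfloor+1$; your expansion of the numerator and the convergence argument are both correct.
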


\begin{cor} \label{Cor_BR} Let $k\ge 1$. For every $n_1,n_2\in \N$,
\begin{equation} \label{BRk}
\tau_{1,k}(n_1n_2) = \sum_{\substack{d_1\mid n_1 \\ d_2\mid n_2}} f_k(d_1,d_2) \tau_{1,k}(n_1/d_1)\tau_{1,k}(n_2/d_2),
\end{equation}
where the function $f_k(n_1,n_2)$ is multiplicative and for prime powers $p^{\nu_1}$, $p^{\nu_2}$,
\begin{equation*}
f_k(p^{\nu_1},p^{\nu_2}) = \begin{cases}
1, & \text{ if $\nu_1=\nu_2=0$ or $\nu_1,\nu_2\ge 1$ and $\nu_1+\nu_2=k$},\\
-1, & \text{ if $\nu_1,\nu_2\ge 1$ and $\nu_1+\nu_2=k+1$},\\
0, & \text{ otherwise}.
\end{cases}
\end{equation*}
\end{cor}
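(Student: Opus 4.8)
The plan is to deduce Corollary~\ref{Cor_BR} directly from the explicit Euler product for $F_k(s_1,s_2)$ furnished by Proposition~\ref{Prop_Dir_series_r_2}. First I would define $f_k$ to be the multiplicative function of two variables whose values on prime powers $p^{\nu_1},p^{\nu_2}$ are exactly as prescribed in the statement, and observe that its two-variable Dirichlet series factors as an Euler product
\begin{equation*}
D(f_k;s_1,s_2)= \prod_p \sum_{\nu_1,\nu_2\ge 0} \frac{f_k(p^{\nu_1},p^{\nu_2})}{p^{\nu_1 s_1+\nu_2 s_2}} = \prod_p \left(1+ \sum_{j=1}^{k-1} \frac1{p^{js_1+(k-j)s_2}} - \sum_{j=1}^k \frac1{p^{js_1+(k+1-j)s_2}} \right),
\end{equation*}
since the only nonzero contributions come from $(\nu_1,\nu_2)=(0,0)$, from pairs with $\nu_1,\nu_2\ge 1$ and $\nu_1+\nu_2=k$ (there are $k-1$ of these, indexed by $j=\nu_1\in\{1,\dots,k-1\}$), and from pairs with $\nu_1,\nu_2\ge 1$ and $\nu_1+\nu_2=k+1$ (there are $k$ of these, indexed by $j=\nu_1\in\{1,\dots,k\}$). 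Thus $D(f_k;s_1,s_2)=F_k(s_1,s_2)$ as an identity of Dirichlet series, absolutely convergent in the region stated in Proposition~\ref{Prop_Dir_series_r_2}.

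Next I would rewrite the conclusion of Proposition~\ref{Prop_Dir_series_r_2} in purely Dirichlet-series form. The series $\sum_{n\ge 1}\tau_{1,k}(n)n^{-s}=\zeta(s)\zeta(ks)$ for $\Re s>1$, so $\zeta(s_1)\zeta(ks_1)$ is the Dirichlet series (in $s_1$) of $n_1\mapsto \tau_{1,k}(n_1)$, and similarly for $s_2$; hence
\begin{equation*}
\zeta(s_1)\zeta(ks_1)\zeta(s_2)\zeta(ks_2) = D(g;s_1,s_2), \qquad g(n_1,n_2):=\tau_{1,k}(n_1)\tau_{1,k}(n_2),
\end{equation*}
with absolute convergence for $\Re s_1,\Re s_2>1$. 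Proposition~\ref{Prop_Dir_series_r_2} then reads $D(h;s_1,s_2)=D(f_k;s_1,s_2)\,D(g;s_1,s_2)$ on the common region $\Re s_1,\Re s_2>1$, where $h(n_1,n_2):=\tau_{1,k}(n_1n_2)$. By the multiplicativity of the Dirichlet series of a convolution recalled in Section~2, the right-hand side equals $D(f_k*g;s_1,s_2)$, so $D(h;s_1,s_2)=D(f_k*g;s_1,s_2)$ as absolutely convergent series on an open set. Comparing coefficients of $n_1^{-s_1}n_2^{-s_2}$ — which is legitimate because a Dirichlet series of several variables is absolutely convergent on a nonempty open region only if it is so for every larger real part, forcing uniqueness of coefficients — yields $h=f_k*g$ pointwise on $\N^2$, which is precisely the claimed identity \eqref{BRk}.

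The only point that needs a little care, and which I regard as the main (minor) obstacle, is the coefficient-identification step: one must know that two multiple Dirichlet series that agree on a nonempty open subset of $\C^r$ have identical coefficients. This follows from the absolute convergence discussion in Section~2 together with the one-variable uniqueness theorem applied iteratively — fix $s_2$ and vary $s_1$ to match the inner Dirichlet series $n_1\mapsto\sum_{n_2}h(n_1,n_2)n_2^{-s_2}$ coefficient by coefficient, then vary $s_2$ — so no new machinery is required. Finally I would note, as an optional remark, that specializing $k=1$ makes $f_1(p^{\nu_1},p^{\nu_2})$ vanish except at $(0,0)$ and $(1,1)$ where it equals $1$ and $-1$ respectively, so $f_1=\mu(n_1)\cdot[n_1=n_2]$-type kernel and \eqref{BRk} recovers the classical Busche--Ramanujan identity $\tau(n_1n_2)=\sum_{d\mid(n_1,n_2)}\mu(d)\tau(n_1/d)\tau(n_2/d)$, which justifies calling Corollary~\ref{Cor_BR} a generalization of it.
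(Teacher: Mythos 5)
Your proof is correct and follows exactly the route the paper intends: the corollary is stated as an immediate consequence of Proposition~\ref{Prop_Dir_series_r_2}, obtained by reading the Euler product for $F_k(s_1,s_2)$ as the Dirichlet series of the multiplicative function $f_k$ and translating the product of Dirichlet series into a Dirichlet convolution. Your identification of the nonzero prime-power values of $f_k$, the coefficient-uniqueness step, and the $k=1$ specialization are all accurate.
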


Note that \eqref{BRk} is a generalization of the Busche-Ramanujan formula
\begin{equation*}
\tau(n_1n_2) = \sum_{d\mid \gcd(n_1,n_2)} \mu(d) \tau(n_1/d)\tau(n_2/d),
\end{equation*}
valid for every $n_1,n_2\in \N$, which is recovered in the case $k=1$.

\begin{prop} \label{Prop_Dir_ser_tau_lcm} Let $r\ge 2$, $k\ge 1$ and let $s_1,\ldots,s_r\in \C$ with $\Re s_j >1$ \textup{($1\le j\le r$)}. Then
\begin{equation*}
\sum_{n_1,\ldots, n_r=1}^{\infty} \frac{\tau_{1,k}([n_1,\ldots, n_r])}{n_1^{s_1}\cdots n_r^{s_r}} = \zeta(s_1)\zeta(ks_1)\cdots
\zeta(s_r)\zeta(ks_r) G_{r,k}(s_1,\ldots,s_r),
\end{equation*}
where
\begin{equation*}
G_{r,k}(s_1,\ldots,s_r) = \sum_{n_1,\ldots, n_r=1}^{\infty} \frac{g_{r,k}(n_1,\ldots, n_r)}{n_1^{s_1}\cdots n_r^{s_r}}
\end{equation*}
is absolutely convergent provided that $\Re s_j >0$ \textup{($1\le j\le r$)} and $\Re(s_j+s_{\ell})>1$ \textup{($1\le j<\ell \le r$)}.
\end{prop}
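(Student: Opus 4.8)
The plan is to follow the scheme used in the proof of Proposition~\ref{Prop_Dir_ser_tau_prod}. Write $\psi(n_1,\ldots,n_r):=\tau_{1,k}([n_1,\ldots,n_r])$. Since $\tau_{1,k}$ is multiplicative and $[m_1n_1,\ldots,m_rn_r]=[m_1,\ldots,m_r]\,[n_1,\ldots,n_r]$ whenever $(m_1\cdots m_r,n_1\cdots n_r)=1$, the function $\psi\colon\N^r\to\C$ is multiplicative, so for $\Re s_j>1$ the series $D(\psi;s_1,\ldots,s_r)$ has an Euler product with local factor $\sum_{\nu_1,\ldots,\nu_r\ge 0}\tau_{1,k}(p^{\max(\nu_1,\ldots,\nu_r)})\,p^{-\nu_1s_1-\cdots-\nu_rs_r}$. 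I would then define $g_{r,k}$ to be the Dirichlet convolution of $\psi$ with the multiplicative function of $r$ variables whose Dirichlet series is $\prod_{j=1}^r\zeta(s_j)^{-1}\zeta(ks_j)^{-1}$; then $g_{r,k}$ is multiplicative, the factorization $D(\psi;s_1,\ldots,s_r)=\prod_{j=1}^r\zeta(s_j)\zeta(ks_j)\cdot D(g_{r,k};s_1,\ldots,s_r)$ holds for $\Re s_j>1$, and the local factor of $D(g_{r,k})$ is the single power series
\begin{equation*}
\Phi(x_1,\ldots,x_r)=\Bigl(\sum_{\nu_1,\ldots,\nu_r\ge 0}\tau_{1,k}\bigl(p^{\max(\nu_1,\ldots,\nu_r)}\bigr)\,x_1^{\nu_1}\cdots x_r^{\nu_r}\Bigr)\prod_{j=1}^r(1-x_j)(1-x_j^k)
\end{equation*}
evaluated at $x_j=p^{-s_j}$; crucially the coefficients of $\Phi$ do not depend on $p$, since $\tau_{1,k}(p^m)=\lfloor m/k\rfloor+1$.

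Next I would isolate two properties of $\Phi$. First, from $0\le\tau_{1,k}(p^m)\le m+1$ the power series defining $\Phi$ converges absolutely on the open unit polydisc $\{\,|x_j|<1\ (1\le j\le r)\,\}$, so, writing $\Phi=1+\sum_{\a\neq\mathbf 0}c_{\a}\,x_1^{a_1}\cdots x_r^{a_r}$, we have $\sum_{\a}|c_{\a}|\,\rho^{a_1+\cdots+a_r}<\infty$ for every $\rho\in[0,1)$. Second, putting all but one variable equal to $0$ and using the one-variable identity $\sum_{\nu\ge 0}\tau_{1,k}(p^{\nu})t^{\nu}=\bigl((1-t)(1-t^k)\bigr)^{-1}$ (the Euler factor of $\zeta(s)\zeta(ks)$), one finds $\Phi(0,\ldots,0,x_j,0,\ldots,0)=1$ for every $j$; hence $\Phi-1$ contains no monomial supported on a single variable, i.e.\ every $\a=(a_1,\ldots,a_r)$ occurring in $\Phi-1$ has at least two indices $j$ with $a_j\ge 1$, and in particular $a_1+\cdots+a_r\ge 2$. (An explicit rational expression for $\Phi$ exhibiting the same structure can be obtained by a short inclusion--exclusion computation based on $\tau_{1,k}(p^m)=\#\{i\ge 0:ik\le m\}$ and $\{\max_j\nu_j\ge ik\}=\bigcup_j\{\nu_j\ge ik\}$.)

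The conclusion then follows from a short estimate. Put $\sigma_j=\Re s_j$ and $\delta=\min_j\sigma_j>0$. For each $\a\neq\mathbf 0$ occurring in $\Phi-1$ fix $i_1<i_2$ with $a_{i_1},a_{i_2}\ge 1$; then $\sum_j a_j\sigma_j=(\sigma_{i_1}+\sigma_{i_2})+\sum_j a'_j\sigma_j$, where $\a'$ arises from $\a$ by decreasing $a_{i_1}$ and $a_{i_2}$ by $1$, so $\sum_j a_j\sigma_j\ge(\sigma_{i_1}+\sigma_{i_2})+\delta(a_1+\cdots+a_r-2)$. Using $p\ge 2$ and $\sigma_{i_1}+\sigma_{i_2}=\Re(s_{i_1}+s_{i_2})>1$,
\begin{equation*}
\sum_p p^{-\sum_j a_j\sigma_j}\le 2^{-\delta(a_1+\cdots+a_r-2)}\sum_p p^{-(\sigma_{i_1}+\sigma_{i_2})}\le 2^{2\delta}M\,(2^{-\delta})^{a_1+\cdots+a_r},\qquad M:=\max_{1\le j<\ell\le r}\sum_p p^{-\Re(s_j+s_\ell)}<\infty,
\end{equation*}
and therefore
\begin{equation*}
\sum_p\sum_{\substack{\nu_1,\ldots,\nu_r\ge 0\\ \nu_1+\cdots+\nu_r\ge 1}}\frac{|g_{r,k}(p^{\nu_1},\ldots,p^{\nu_r})|}{p^{\nu_1\sigma_1+\cdots+\nu_r\sigma_r}}=\sum_p\sum_{\a\neq\mathbf 0}|c_{\a}|\,p^{-\sum_j a_j\sigma_j}\le 2^{2\delta}M\sum_{\a}|c_{\a}|\,(2^{-\delta})^{a_1+\cdots+a_r}<\infty
\end{equation*}
(using that every monomial of $\Phi-1$ has two positive exponents, and that $\Phi$ is analytic on the open unit polydisc, applied with $\rho=2^{-\delta}\in[0,1)$). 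By the absolute-convergence criterion for Euler products of multiplicative functions of several variables recalled above, $G_{r,k}(s_1,\ldots,s_r)=D(g_{r,k};s_1,\ldots,s_r)$ is absolutely convergent whenever $\Re s_j>0$ $(1\le j\le r)$ and $\Re(s_j+s_\ell)>1$ $(1\le j<\ell\le r)$, which together with the factorization above is the assertion.

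The main obstacle is the second step, namely spotting the cancellation peculiar to the least common multiple that forces every monomial of $\Phi-1$ to involve at least two of the variables; granting this, the hypothesis $\Re(s_j+s_\ell)>1$ is exactly what makes the $p$-sum converge. A secondary point to watch is that $\sum_{\a}|c_{\a}|\rho^{a_1+\cdots+a_r}$ must be finite for $\rho$ arbitrarily close to $1$ (since $\delta$ may be small); this is why it matters that $\Phi$ is analytic on the \emph{entire} open unit polydisc, with coefficients independent of $p$.
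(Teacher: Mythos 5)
Your proof is correct and follows essentially the same route as the paper: expand the Euler product of the multiplicative $r$-variable function, divide out the local factors of $\prod_j\zeta(s_j)\zeta(ks_j)$, and observe that every surviving monomial in the local factor of $G_{r,k}$ involves at least two of the variables, which gives absolute convergence under $\Re s_j>0$ and $\Re(s_j+s_\ell)>1$. Your way of seeing the cancellation --- specializing all but one variable to $0$ and using $\sum_{\nu\ge 0}\tau_{1,k}(p^\nu)t^\nu=\bigl((1-t)(1-t^k)\bigr)^{-1}$ --- is a cleaner packaging of the paper's explicit check that the coefficients of $1/p^{\ell s_j}$ vanish, and you additionally write out the convergence estimate that the paper only asserts.
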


Concerning the exponential divisor function $\tau^{(e)}$ we have

\begin{prop} \label{Prop_Dir_ser_tau_exp_prod_lcm} Let $r\ge 2$ and let $s_1,\ldots,s_r\in \C$ with
$\Re s_j >1$ \textup{($1\le j\le r$)}. Then
\begin{equation*}
\sum_{n_1,\ldots, n_r=1}^{\infty} \frac{\tau^{(e)}(n_1\cdots n_r)}{n_1^{s_1}\cdots n_r^{s_r}} = \zeta(s_1)\zeta(2s_1) \cdots
\zeta(s_r)\zeta(2s_r) T_r(s_1,\ldots,s_r),
\end{equation*}
and
\begin{equation*}
\sum_{n_1,\ldots, n_r=1}^{\infty} \frac{\tau^{(e)}([n_1,\ldots
n_r])}{n_1^{s_1}\cdots n_r^{s_r}} = \zeta(s_1)\zeta(2s_1) \cdots
\zeta(s_r)\zeta(2s_r) V_r(s_1,\ldots,s_r),
\end{equation*}
where the Dirichlet series
\begin{equation*}
T_r(s_1,\ldots,s_r) = \sum_{n_1,\ldots, n_r=1}^{\infty} \frac{t(n_1,\ldots, n_r)}{n_1^{s_1}\cdots n_r^{s_r}}
\end{equation*}
and
\begin{equation*}
V_r(s_1,\ldots,s_r) = \sum_{n_1,\ldots, n_r=1}^{\infty}
\frac{v(n_1,\ldots, n_r)}{n_1^{s_1}\cdots n_r^{s_r}}
\end{equation*}
are both absolutely convergent if $\Re s_j >1/5$ \textup{($1\le j\le r$)}
and $\Re (s_j+s_{\ell}) >1$ {\rm ($1\le j<\ell \le r$)}.
\end{prop}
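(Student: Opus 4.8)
The plan is to reduce both assertions to a computation of Euler factors, as in the proofs of Propositions~\ref{Prop_Dir_ser_tau_prod}--\ref{Prop_Dir_ser_tau_lcm}. First I would note that the $r$-variable functions $h(n_1,\ldots,n_r):=\tau^{(e)}(n_1\cdots n_r)$ and $H(n_1,\ldots,n_r):=\tau^{(e)}([n_1,\ldots,n_r])$ are multiplicative: if $(m_1\cdots m_r,\,n_1\cdots n_r)=1$, then $m_1\cdots m_r$ and $n_1\cdots n_r$ are coprime with product $(m_1n_1)\cdots(m_rn_r)$, and for every prime $p$ one has $v_p([m_1n_1,\ldots,m_rn_r])=v_p([m_1,\ldots,m_r])+v_p([n_1,\ldots,n_r])$, the two least common multiples being coprime; since $\tau^{(e)}$ is multiplicative in one variable, so are $h$ and $H$. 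Hence, for $\Re s_j>1$, the Euler product formula \eqref{Euler_product} gives
\[
D(h;s_1,\ldots,s_r)=\prod_p\sum_{\nu_1,\ldots,\nu_r\ge0}\frac{\tau(\nu_1+\cdots+\nu_r)}{p^{\nu_1s_1+\cdots+\nu_rs_r}},\qquad D(H;s_1,\ldots,s_r)=\prod_p\sum_{\nu_1,\ldots,\nu_r\ge0}\frac{\tau(\max(\nu_1,\ldots,\nu_r))}{p^{\nu_1s_1+\cdots+\nu_rs_r}},
\]
with the convention $\tau(0):=1$, since $\tau^{(e)}(p^\nu)=\tau(\nu)$ while $v_p(n_1\cdots n_r)=\nu_1+\cdots+\nu_r$ and $v_p([n_1,\ldots,n_r])=\max(\nu_1,\ldots,\nu_r)$ when $n_j=p^{\nu_j}$.

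Dividing these identities by $\prod_{j=1}^r\zeta(s_j)\zeta(2s_j)=\prod_p\prod_{j=1}^r(1-p^{-s_j})^{-1}(1-p^{-2s_j})^{-1}$ defines $T_r$ and $V_r$ (equivalently the multiplicative functions $t$ and $v$) as Euler products; writing $z_j=p^{-s_j}$, the $p$-factor of $T_r$ is
\[
S(z_1,\ldots,z_r):=\prod_{j=1}^r(1-z_j)(1-z_j^2)\sum_{\nu_1,\ldots,\nu_r\ge0}\tau(\nu_1+\cdots+\nu_r)\,z_1^{\nu_1}\cdots z_r^{\nu_r},
\]
and that of $V_r$ is the same with $\max(\nu_1,\ldots,\nu_r)$ in place of $\nu_1+\cdots+\nu_r$. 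I would then study $S$ (and its $\max$-analogue) as a formal power series. The crucial observation is that on the slice $z_i=0$ $(i\ne j)$ both collapse to the single-variable series $(1-z_j)(1-z_j^2)\sum_{\nu\ge0}\tau(\nu)z_j^\nu$ (since $\max=\nu_1+\cdots+\nu_r$ there), whose coefficient of $z_j^n$ equals $\tau(n)-\tau(n-1)-\tau(n-2)+\tau(n-3)$; from $\tau(0)=\tau(1)=1$, $\tau(2)=\tau(3)=2$, $\tau(4)=3$, $\tau(5)=2$ this coefficient is $1$ for $n=0$, vanishes for $1\le n\le4$, and equals $-1$ for $n=5$. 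Thus $S(0,\ldots,z_j,\ldots,0)=1-z_j^5+\cdots$, so $S-1$ (and likewise for $V_r$) has no constant term and no monomial supported on a single variable $z_j$ of degree at most $4$.

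Consequently, every monomial $p^{-\nu_1s_1-\cdots-\nu_rs_r}$ with $(\nu_1,\ldots,\nu_r)\ne(0,\ldots,0)$ occurring (with nonzero coefficient) in the local factor of $T_r$ or of $V_r$ is of one of two types: either $\nu_j,\nu_\ell\ge1$ for some $j\ne\ell$, so that $\nu_1\Re s_1+\cdots+\nu_r\Re s_r\ge\Re s_j+\Re s_\ell>1$ by hypothesis; or it is a pure power $p^{-\nu_js_j}$ with $\nu_j\ge5$, so that $\nu_j\Re s_j\ge5\Re s_j>1$ since $\Re s_j>1/5$. Moreover, since $\tau(m)\ll_\varepsilon m^\varepsilon$ and the coefficients of $S$ (hence the values $t(p^{\nu_1},\ldots,p^{\nu_r})$, and similarly for $v$) are bounded sums of values of $\tau$, one has $|t(p^{\nu_1},\ldots,p^{\nu_r})|\ll_{r,\varepsilon}(\nu_1+\cdots+\nu_r)^\varepsilon$; splitting the double sum $\sum_p\sum_{(\nu_1,\ldots,\nu_r)\ne(0,\ldots,0)}|t(p^{\nu_1},\ldots,p^{\nu_r})|\,p^{-\nu_1\Re s_1-\cdots-\nu_r\Re s_r}$ according to these two types then majorises it by a finite combination of convergent series of the shapes $\sum_p p^{-5\Re s_j}$ and $\sum_p p^{-\Re(s_j+s_\ell)}$. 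By the absolute-convergence criterion for multiplicative Dirichlet series recalled above, this shows that $T_r$ and $V_r$ converge absolutely for $\Re s_j>1/5$ and $\Re(s_j+s_\ell)>1$, while the two displayed identities hold whenever $\Re s_j>1$ simply by equating Euler factors. I expect the only real work to be this last majorisation step --- controlling the infinite product uniformly over all ``bad'' multi-indices and over $p$; the exponent $1/5$ is forced entirely by the first discrepancy, $\tau(5)=2$ against the coefficient $3$ of $z^5$ in the local factor of $\zeta(s)\zeta(2s)$.
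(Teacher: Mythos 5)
Your proposal is correct and follows essentially the same route as the paper: expand the Euler product, divide out $\prod_j\zeta(s_j)\zeta(2s_j)$, and observe that the only surviving monomials supported on a single variable have exponent at least $5$, which forces the condition $\Re s_j>1/5$. The only cosmetic difference is that you verify the vanishing of the coefficients of $z_j^n$ for $1\le n\le 4$ by direct computation of $\tau(n)-\tau(n-1)-\tau(n-2)+\tau(n-3)$, whereas the paper deduces it from the coincidence $\tau^{(e)}(p^\nu)=\tau_{1,2}(p^\nu)$ for $\nu\le 4$ together with Proposition~\ref{Prop_Dir_ser_tau_prod}; both give the same coefficient $-1$ at $n=5$.
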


Now we move to the function $\tau^*$. Note that $\tau^*(n_1\cdots n_r)=\tau^*([n_1,\ldots,n_r])$ for every $n_1,\ldots,n_r\in \N$.
We have the next result:

\begin{prop} \label{Prop_Dir_ser_tau_star_prod} Let $r\ge 2$ and let $s_1,\ldots,s_r\in \C$ with $\Re s_j >1$ \textup{($1\le j\le r$)}. Then
\begin{equation*}
\sum_{n_1,\ldots, n_r=1}^{\infty} \frac{\tau^*(n_1\cdots n_r)}{n_1^{s_1}\cdots n_r^{s_r}} = \zeta^2(s_1)\cdots
\zeta^2(s_r) H_r(s_1,\ldots,s_r),
\end{equation*}
where
\begin{equation*}
H_r(s_1,\ldots,s_r) = \prod_p \left(1-\frac1{p^{s_1}} \right)\cdots \left(1-\frac1{p^{s_r}} \right) \left(2- \left(1-\frac1{p^{s_1}}
\right)\cdots \left(1-\frac1{p^{s_r}} \right) \right)
\end{equation*}
is absolutely convergent if $\Re s_j >1/2$ \textup{($1\le j\le r$)}.
\end{prop}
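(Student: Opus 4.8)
The plan is to work with the Euler product \eqref{Euler_product}, since $\tau^*=2^{\omega}$ is multiplicative in the one-variable sense and the associated $r$-variable function $(n_1,\ldots,n_r)\mapsto \tau^*(n_1\cdots n_r)$ is multiplicative in the sense defined above. First I would compute the local factor at a prime $p$: for exponents $\nu_1,\ldots,\nu_r\ge 0$ we have $\tau^*(p^{\nu_1+\cdots+\nu_r}) = 2$ if $\nu_1+\cdots+\nu_r\ge 1$ and $=1$ if all $\nu_j=0$. Hence the local sum is
\begin{equation*}
\sum_{\nu_1,\ldots,\nu_r\ge 0} \frac{\tau^*(p^{\nu_1+\cdots+\nu_r})}{p^{\nu_1s_1+\cdots+\nu_rs_r}} = 1 + 2\left(\prod_{j=1}^r \frac{1}{1-p^{-s_j}} - 1\right) = 2\prod_{j=1}^r \frac{1}{1-p^{-s_j}} - 1,
\end{equation*}
valid for $\Re s_j>1$ (in fact $\Re s_j>0$ for the local identity).

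Next I would factor out $\prod_{j=1}^r (1-p^{-s_j})^{-1}$ from each local factor to match the claimed $\zeta^2(s_1)\cdots\zeta^2(s_r)$. Writing $P_j := 1-p^{-s_j}$, the local factor equals $(2P_1^{-1}\cdots P_r^{-1} - 1) = P_1^{-1}\cdots P_r^{-1}\cdot\big(2 - P_1\cdots P_r\big)$. To extract $\zeta^2$ rather than $\zeta$ in each variable I would instead write this as $P_1^{-2}\cdots P_r^{-2}\cdot\big(P_1\cdots P_r\,(2-P_1\cdots P_r)\big)$, so that the correction factor is
\begin{equation*}
H_r(s_1,\ldots,s_r) = \prod_p \left(1-\frac{1}{p^{s_1}}\right)\cdots\left(1-\frac{1}{p^{s_r}}\right)\left(2-\left(1-\frac{1}{p^{s_1}}\right)\cdots\left(1-\frac{1}{p^{s_r}}\right)\right),
\end{equation*}
exactly as stated. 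Taking the product over all $p$ and using $\zeta(s_j)=\prod_p(1-p^{-s_j})^{-1}$ gives the asserted identity, provided convergence issues are handled.

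The only real work is the convergence claim for $H_r$, i.e. that the Euler product converges absolutely for $\Re s_j>1/2$. For this I would expand the $p$-local factor of $H_r$ and check that its deviation from $1$ is $O(p^{-1})$ in a suitable sense is \emph{not} enough — I need $O(p^{-2\sigma})$ with $2\sigma>1$. Set $x_j := p^{-s_j}$, so $|x_j|\le p^{-1/2-\delta}$ is small. Then $\prod_j P_j = \prod_j(1-x_j) = 1 - \sum_j x_j + O\big(\sum_{j<\ell}|x_jx_\ell|\big)$, hence $2 - \prod_j P_j = 1 + \sum_j x_j + O(\sum_{j<\ell}|x_jx_\ell|)$, and therefore
\begin{equation*}
\prod_j P_j \cdot \left(2 - \prod_j P_j\right) = \left(1 - \sum_j x_j + \cdots\right)\left(1 + \sum_j x_j + \cdots\right) = 1 - \left(\sum_j x_j\right)^2 + \cdots,
\end{equation*}
so the linear terms in the $x_j$ cancel and the local factor is $1 + O\big(\sum_{j,\ell}|x_jx_\ell|\big) = 1 + O\big(\max_j p^{-2\Re s_j}\big)$. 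Since $\sum_p p^{-2\sigma}<\infty$ for $\sigma>1/2$, the product converges absolutely in that region. I would write this cancellation out carefully (it is the crux), noting that it is precisely the "$2-(\cdots)$" structure that produces it; the rest is bookkeeping. Finally, since $H_r$ is represented by an absolutely convergent Euler product for $\Re s_j>1/2$ and the factored identity holds termwise for $\Re s_j>1$, analytic continuation / the uniqueness of Dirichlet series gives the stated formula on $\Re s_j>1$, and $H_r$ itself extends holomorphically to $\Re s_j>1/2$.
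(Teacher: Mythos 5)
Your proposal is correct and follows essentially the same route as the paper: expand the Euler product, compute the local factor as $2\prod_j(1-p^{-s_j})^{-1}-1$, factor out $\zeta^2(s_1)\cdots\zeta^2(s_r)$, and observe that the linear terms $1/p^{s_j}$ cancel in the local factor of $H_r$ (your identity $Q(2-Q)=1-(1-Q)^2$ makes this cancellation explicit), so that the product converges absolutely for $\Re s_j>1/2$. The closing appeal to analytic continuation is unnecessary but harmless, since the identity already holds directly for $\Re s_j>1$.
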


\section{Asymptotic formulas}

We prove the following results.

\begin{theorem}  \label{Th_asympt_tau_1k} If $k,r\ge 2$, then
\begin{equation} \label{asympt_tau_1k}
\sum_{n_1,\ldots,n_r\le x} \tau_{1,k}(n_1\cdots n_r)= A_{r,k} x^r + B_{r,k} x^{r-1+1/k} +
O(x^{r-1+\theta_k+\varepsilon}),
\end{equation}
\begin{equation} \label{asympt_tau_ik_lcm}
\sum_{n_1,\ldots,n_r\le x} \tau_{1,k}([n_1, \ldots, n_r]) = C_{r,k} x^r + D_{r,k} x^{r-1+1/k} +
O(x^{r-1+\theta_k+\varepsilon}),
\end{equation}
for every $\varepsilon >0$, where   $\theta_k$ is the exponent in the
error term of formula \eqref{tau_1dim} and $A_{r,k}$, $B_{r,k}$, $C_{r,k}$, $D_{r,k}$ are computable constants. Here
\begin{equation*}
A_{r,k}:= \prod_p \left(1-\frac1{p}\right)^r \sum_{\nu_1,\ldots,\nu_r=0}^{\infty}
\frac{\lfloor (\nu_1 +\cdots +\nu_r)/k \rfloor +1}{p^{\nu_1+\cdots+ \nu_r}},
\end{equation*}
\begin{equation*}
C_{r,k}:= \prod_p \left(1-\frac1{p}\right)^r \sum_{\nu_1,\ldots,\nu_r=0}^{\infty}
\frac{\lfloor \max(\nu_1,\ldots,\nu_r)/k \rfloor +1}{p^{\nu_1+\cdots+ \nu_r}}.
\end{equation*}
\end{theorem}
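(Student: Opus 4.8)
The idea is to use the factorisation of Proposition~\ref{Prop_Dir_ser_tau_prod} to reduce the $r$-fold sum to a weighted sum of products of one-dimensional sums, to each of which the classical estimate~\eqref{tau_1dim} applies.

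Since $\sum_{n\ge 1}\tau_{1,k}(n)n^{-s}=\zeta(s)\zeta(ks)$ for $\Re s>1$, Proposition~\ref{Prop_Dir_ser_tau_prod} says that $D(\tau_{1,k}(n_1\cdots n_r);s_1,\ldots,s_r)=D(g;s_1,\ldots,s_r)F_{r,k}(s_1,\ldots,s_r)$ with $g(n_1,\ldots,n_r)=\tau_{1,k}(n_1)\cdots\tau_{1,k}(n_r)$, all series being absolutely convergent for $\Re s_j$ large. Comparing Dirichlet coefficients yields the pointwise convolution identity
\begin{equation*}
\tau_{1,k}(n_1\cdots n_r)=\sum_{d_1\mid n_1,\ldots,d_r\mid n_r}f_{r,k}(d_1,\ldots,d_r)\,\tau_{1,k}(n_1/d_1)\cdots\tau_{1,k}(n_r/d_r)
\end{equation*}
for all $n_1,\ldots,n_r\in\N$. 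Summing over $n_j\le x$ and rearranging the (finite) sum,
\begin{equation*}
\sum_{n_1,\ldots,n_r\le x}\tau_{1,k}(n_1\cdots n_r)=\sum_{d_1,\ldots,d_r\le x}f_{r,k}(d_1,\ldots,d_r)\prod_{j=1}^r T(x/d_j),\qquad T(y):=\sum_{m\le y}\tau_{1,k}(m).
\end{equation*}

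Next I would insert $T(y)=\zeta(k)y+\zeta(1/k)y^{1/k}+E(y)$ with $E(y)\ll y^{\theta_k+\varepsilon}$, which is~\eqref{tau_1dim} (recall $\theta_k\le 1/(k+2)<1/k$), expand the product over $j$ into $3^r$ terms and estimate each. The term where every factor contributes $\zeta(k)x/d_j$ equals $\zeta(k)^r x^r\sum_{d_j\le x}f_{r,k}(d)/(d_1\cdots d_r)$; extending the range to all $d_j$ replaces this by $A_{r,k}x^r$ with $A_{r,k}:=\zeta(k)^r F_{r,k}(1,\ldots,1)$ (finite, since $F_{r,k}$ is absolutely convergent at $(1,\ldots,1)$ by Proposition~\ref{Prop_Dir_ser_tau_prod}), at a cost of $\ll x^{r-1+\varepsilon}$: truncating $d_j>x$ for one index lowers only the $j$-th exponent, which, the other exponents staying equal to $1$, keeps the point inside the region $\Re s_i>0,\ \Re(s_i+s_\ell)>1$ where $F_{r,k}$ converges absolutely, and hence saves a power $x^{-1+\varepsilon}$. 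The $r$ terms in which exactly one factor contributes $\zeta(1/k)(x/d_j)^{1/k}$ and the rest contribute $\zeta(k)x/d_\ell$ give, after the same extension, $B_{r,k}x^{r-1+1/k}$ with $B_{r,k}=r\,\zeta(k)^{r-1}\zeta(1/k)F_{r,k}(1/k,1,\ldots,1)$ (using that $F_{r,k}$ is symmetric), again with truncation cost $\ll x^{r-1+\varepsilon}$. Every remaining term carries either at least two factors of the type $(x/d_j)^{1/k}$, and is then $\ll x^{r-m(1-\sigma)}\ll x^{r-1+\varepsilon}$ on choosing $\sigma=\tfrac12+\varepsilon>\tfrac1k$ and using $m\ge 2$ together with absolute convergence of $F_{r,k}$ for $\Re(s_i+s_\ell)>1$; or at least one factor $E(x/d_j)$, in which case bounding that factor by $(x/d_j)^{\theta_k+\varepsilon}$ and every other factor crudely by $\ll x/d_\ell$ (legitimate as $x/d_\ell\ge 1$ and $1/k,\theta_k+\varepsilon\le 1$) leaves $\ll x^{r-1+\theta_k+\varepsilon}\sum_d|f_{r,k}(d)|\,d_j^{-\theta_k-\varepsilon}\prod_{\ell\ne j}d_\ell^{-1}$, a convergent sum. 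Adding these estimates proves~\eqref{asympt_tau_1k}; the genuine secondary term survives exactly because $1/k>\theta_k$.

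It remains to identify $A_{r,k}$. From the Euler product~\eqref{Euler_product} and $\tau_{1,k}(p^m)=\lfloor m/k\rfloor+1$ one gets
\begin{equation*}
F_{r,k}(s_1,\ldots,s_r)=\prod_p\Bigl(\prod_{j=1}^r(1-p^{-s_j})(1-p^{-ks_j})\Bigr)\sum_{\nu_1,\ldots,\nu_r\ge 0}\frac{\lfloor(\nu_1+\cdots+\nu_r)/k\rfloor+1}{p^{\nu_1 s_1+\cdots+\nu_r s_r}},
\end{equation*}
and evaluating at $s_j=1$, then multiplying by $\zeta(k)^r=\prod_p(1-p^{-k})^{-r}$, gives exactly the stated formula for $A_{r,k}$. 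Formula~\eqref{asympt_tau_ik_lcm} follows by the same argument with Proposition~\ref{Prop_Dir_ser_tau_lcm} in place of Proposition~\ref{Prop_Dir_ser_tau_prod}, i.e.\ with $G_{r,k}$ (which has the same region of absolute convergence) in place of $F_{r,k}$, and $C_{r,k}=\zeta(k)^r G_{r,k}(1,\ldots,1)$ is evaluated in the same way from $\tau_{1,k}([p^{\nu_1},\ldots,p^{\nu_r}])=\lfloor\max(\nu_1,\ldots,\nu_r)/k\rfloor+1$. The main obstacle is the error bookkeeping in the third paragraph: one must check that every truncation error stays of size $x^{r-1+\varepsilon}$ rather than the a priori $x^{r-1/2}$ — which works because only a single variable's exponent is lowered at a time, so one never violates $\Re(s_i+s_\ell)>1$ — so that the term $B_{r,k}x^{r-1+1/k}$ is not absorbed by the error.
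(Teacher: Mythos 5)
Your proposal follows the paper's proof essentially step for step: the convolution identity extracted from Proposition \ref{Prop_Dir_ser_tau_prod}, insertion of \eqref{tau_1dim} into each one-dimensional factor, expansion of the product into $3^r$ terms, and completion of the truncated sums using the absolute convergence of $F_{r,k}$ in the region $\Re s_j>0$, $\Re(s_j+s_\ell)>1$. Your explicit treatment of the terms carrying at least two factors $(x/d_j)^{1/k}$ (majorizing $(x/d_j)^{1/k}$ by $(x/d_j)^{1/2+\varepsilon}$ so as to remain inside the region $\Re(s_i+s_\ell)>1$) is in fact more careful than the paper, which passes over these terms in silence.

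One point where your justification is too quick: for the secondary term you assert the truncation cost is $\ll x^{r-1+\varepsilon}$ ``because only a single variable's exponent is lowered at a time, so one never violates $\Re(s_i+s_\ell)>1$.'' That reasoning is valid for the main term, where all the other exponents equal $1$, but in the sum $\sum_{d_1,\ldots,d_r\le x} f_{r,k}(d_1,\ldots,d_r)\,d_1^{-1}\cdots d_{r-1}^{-1}d_r^{-1/k}$ the tail where some $d_t>x$ with $t\ne r$ cannot be handled this way: lowering $s_t$ from $1$ to $\varepsilon$ leaves the pair $(s_t,s_r)=(\varepsilon,1/k)$ with sum $\varepsilon+1/k\le\varepsilon+1/2<1$, i.e.\ outside the region of absolute convergence guaranteed by Proposition \ref{Prop_Dir_ser_tau_prod}. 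The paper's Case II repairs this by writing $d_r^{-1/k}=d_r^{-1}\cdot d_r^{1-1/k}\le d_r^{-1}x^{1-1/k}$ (legitimate since $d_r\le x$), which restores the admissible exponent vector $(\varepsilon,1,\ldots,1)$ and still yields the saving $x^{1-1/k}\cdot x^{-1+\varepsilon}=x^{-1/k+\varepsilon}$, hence a total truncation cost $x^{r-1+1/k}\cdot x^{-1/k+\varepsilon}=x^{r-1+\varepsilon}$ as claimed. With this one repair your argument is complete and coincides with the paper's, including the identification of $A_{r,k}$ and $B_{r,k}$ and the lcm case via Proposition \ref{Prop_Dir_ser_tau_lcm}.
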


It follows from Proposition \ref{Prop_Dir_series_r_2} that for $r=2$
and $k\ge 2$,
\begin{equation*}
A_{2,k} = \zeta^2(k) F_k(1,1)= \zeta^2(k) \prod_p
\left(1+\frac{k-1}{p^k}-\frac{k}{p^{k+1}} \right).
\end{equation*}

\begin{theorem} \label{Th_asympt_tau_exp} If $r\ge 2$, then
\begin{equation*}
\sum_{n_1,\ldots,n_r\le x} \tau^{(e)}(n_1\cdots n_r)= K_r x^r + L_r
x^{r-1/2} + O(x^{r-1+\theta_2+ \varepsilon}),
\end{equation*}
\begin{equation*}
\sum_{n_1,\ldots,n_r\le x} \tau^{(e)}([n_1,\ldots, n_r])= K'_r x^r +
L'_r x^{r-1/2} + O(x^{r-1+\theta_2+ \varepsilon}),
\end{equation*}
for every $\varepsilon >0$, where $\theta_2\le 0.2208 \ldots$ is
defined by \eqref{tau_1dim} and $K_r$, $L_r$, $K'_r$, $L'_r$ are computable constants. Here
\begin{equation*}
K_r=\prod_p \left(1-\frac1{p} \right)^r \left( 1+
\sum_{\substack{\nu_1,\ldots,\nu_r=0\\ \nu_1+\cdots +\nu_r \ge 1
}}^{\infty} \frac{\tau(\nu_1+\cdots +\nu_r)}{p^{\nu_1+\cdots+
\nu_r}}\right),
\end{equation*}
\begin{equation*}
K'_r= \prod_p \left(1-\frac1{p} \right)^r \left( 1+
\sum_{\substack{\nu_1,\ldots,\nu_r=0\\ \nu_1+\cdots +\nu_r\ge 1
}}^{\infty} \frac{\tau(\max(\nu_1,\ldots,\nu_r))}{p^{\nu_1+\cdots+
\nu_r}}\right).
\end{equation*}
\end{theorem}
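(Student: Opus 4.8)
The plan is to reduce both sums to the one‑variable estimate \eqref{tau_1dim} with $k=2$ by the convolution method, Proposition~\ref{Prop_Dir_ser_tau_exp_prod_lcm} supplying the required factorization. Let $g\colon\N^r\to\C$ be the function $g(n_1,\ldots,n_r)=\tau_{1,2}(n_1)\cdots\tau_{1,2}(n_r)$, so that $D(g;s_1,\ldots,s_r)=\prod_{j=1}^{r}\zeta(s_j)\zeta(2s_j)$. For $\Re s_j>1$ all the series occurring in $D(\tau^{(e)}(n_1\cdots n_r);s_1,\ldots,s_r)=D(g;s_1,\ldots,s_r)\,T_r(s_1,\ldots,s_r)$ converge absolutely, so comparing Dirichlet coefficients gives the pointwise identity $\tau^{(e)}(n_1\cdots n_r)=(g*t)(n_1,\ldots,n_r)$. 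Writing $\mathbf d=(d_1,\ldots,d_r)$ and $T(y):=\sum_{m\le y}\tau_{1,2}(m)$, I would then rearrange the resulting finite double sum to get
\[
\sum_{n_1,\ldots,n_r\le x}\tau^{(e)}(n_1\cdots n_r)=\sum_{d_1,\ldots,d_r}t(\mathbf d)\,T(x/d_1)\cdots T(x/d_r),
\]
the l.c.m.\ sum being treated the same way with $v$ and $V_r$ in place of $t$ and $T_r$.

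Next I would substitute $T(y)=\zeta(2)y+\zeta(1/2)y^{1/2}+O(y^{\theta_2+\varepsilon})$ and multiply out the $r$ factors, producing $3^r$ terms. The term in which each factor contributes $\zeta(2)x/d_j$ equals $\zeta(2)^{r}x^{r}\sum_{d_1,\ldots,d_r\le x}t(\mathbf d)(d_1\cdots d_r)^{-1}$; since $(1,\ldots,1)$ lies well inside the domain of absolute convergence of $T_r$, extending the summation over all $\mathbf d$ costs only $O(x^{r-\beta})$ for some $\beta$ with $1-\theta_2\le\beta<4/5$ (this is where the inequality $\theta_2>1/5$ enters), so this term is $K_rx^{r}+O(x^{r-1+\theta_2+\varepsilon})$ with $K_r=\zeta(2)^{r}T_r(1,\ldots,1)$. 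The $r$ terms in which exactly one factor contributes $\zeta(1/2)(x/d_j)^{1/2}$ and the rest contribute $\zeta(2)x/d_i$ add up, by the symmetry of $T_r$, to $L_rx^{r-1/2}$ with $L_r=r\,\zeta(1/2)\zeta(2)^{r-1}T_r(1/2,1,\ldots,1)$; here $(1/2,1,\ldots,1)$ is still in the convergence region, and completing the corresponding $\mathbf d$-sum costs $O(x^{r-1/2-\beta'})$ for some $\beta'$ with $1/2-\theta_2\le\beta'<3/10$, again admissible because $\theta_2>1/5$.

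The hard part will be showing that each of the remaining terms is $O(x^{r-1+\theta_2+\varepsilon})$. For a partition $\{1,\ldots,r\}=E\cup S\cup M$ --- $E$ labelling the factors bounded by the $O$-term, $S$ those bounded by $\zeta(1/2)(x/d_j)^{1/2}$, $M$ the rest --- with $(|E|,|S|)\notin\{(0,0),(0,1)\}$, bounding each factor trivially gives a contribution
\[
\ll x^{\,|E|(\theta_2+\varepsilon)+|S|/2+|M|}\sum_{d_1,\ldots,d_r\le x}|t(\mathbf d)|\prod_{j\in E}d_j^{-\theta_2-\varepsilon}\prod_{j\in S}d_j^{-1/2}\prod_{j\in M}d_j^{-1}.
\]
If $|E|=1$ and $|S|=0$, the exponent vector here already lies in the convergence region of $T_r$ (once more using $\theta_2>1/5$), the $\mathbf d$-sum is $O(1)$, and the contribution is exactly of order $x^{r-1+\theta_2+\varepsilon}$. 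In every other case $|E|+|S|\ge2$, and the genuine obstacle appears: the infinite version of the $\mathbf d$-sum \emph{diverges}, because the cross condition $\Re(s_j+s_\ell)>1$ of Proposition~\ref{Prop_Dir_ser_tau_exp_prod_lcm} is violated. Since $\theta_2<1/2$, one can raise every exponent indexed by $E\cup S$ to $\tfrac12+\delta$ with a small fixed $\delta>0$; the enlarged exponent vector lies in the convergence region (pair sums are $\ge1+2\delta$ among the raised coordinates and larger otherwise), so a term‑by‑term comparison bounds the $\mathbf d$-sum by a constant times $x^{\,|E|(1/2+\delta-\theta_2-\varepsilon)+|S|\delta}$. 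Multiplying by the prefactor, the exponent of $x$ collapses to $r-(|E|+|S|)(\tfrac12-\delta)\le r-1+2\delta$, which is $<r-1+\theta_2+\varepsilon$ once $\delta$ is chosen small; as there are finitely many partitions, this finishes the error analysis. The l.c.m.\ sum is dealt with identically, $v$ and $V_r$ satisfying the same convergence conditions and $V_r$ being symmetric.

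Finally, to pin down the constants I would write $D(\tau^{(e)}(n_1\cdots n_r);s_1,\ldots,s_r)=\zeta(s_1)\cdots\zeta(s_r)\,P(s_1,\ldots,s_r)$, with $P:=\bigl(\prod_j\zeta(2s_j)\bigr)T_r$ absolutely convergent at $(1,\ldots,1)$; the computation above shows $K_r=P(1,\ldots,1)$. Expanding $P$ through the Euler product \eqref{Euler_product} of the multiplicative function $(n_1,\ldots,n_r)\mapsto\tau^{(e)}(n_1\cdots n_r)$, and using $\tau^{(e)}(p^{\nu_1}\cdots p^{\nu_r})=\tau^{(e)}(p^{\nu_1+\cdots+\nu_r})=\tau(\nu_1+\cdots+\nu_r)$ for $\nu_1+\cdots+\nu_r\ge1$, one obtains
\[
P(s_1,\ldots,s_r)=\prod_p\Bigl(1-\tfrac1{p^{s_1}}\Bigr)\cdots\Bigl(1-\tfrac1{p^{s_r}}\Bigr)\Bigl(1+\sum_{\substack{\nu_1,\ldots,\nu_r\ge0\\ \nu_1+\cdots+\nu_r\ge1}}\frac{\tau(\nu_1+\cdots+\nu_r)}{p^{\nu_1s_1+\cdots+\nu_rs_r}}\Bigr),
\]
and $s_1=\cdots=s_r=1$ gives the stated $K_r$. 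The same expansion with $V_r$ and with $\tau^{(e)}([p^{\nu_1},\ldots,p^{\nu_r}])=\tau(\max(\nu_1,\ldots,\nu_r))$ gives $K'_r$, while $L_r=r\,\zeta(1/2)\zeta(2)^{r-1}T_r(1/2,1,\ldots,1)$ and $L'_r=r\,\zeta(1/2)\zeta(2)^{r-1}V_r(1/2,1,\ldots,1)$. The single numerical fact that makes all the error bookkeeping go through is $1/5<\theta_2<1/2$, which holds for the admissible value $\theta_2=1057/4785$.
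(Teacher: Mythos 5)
Your proof is correct and follows essentially the same route as the paper, whose own proof simply invokes the argument of Theorem~\ref{Th_asympt_tau_1k} with $k=2$ together with Proposition~\ref{Prop_Dir_ser_tau_exp_prod_lcm}: factor the multiple Dirichlet series, convolve against the one-variable asymptotic for $\tau_{1,2}$, and complete the coefficient sums. You are in fact more explicit than the paper about the cross terms having two or more non-main factors (where the completed coefficient series diverges because the pair condition $\Re(s_j+s_\ell)>1$ fails) and about the precise role of $1/5<\theta_2<1/2$ in the completion estimates; the paper leaves both points implicit in its appeal to the earlier proof.
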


Our multivariable asymptotic formulas regarding the divisor functions $\tau$ and $\tau^*$ are special cases of
the following general convolution result.

\begin{theorem} \label{Th_gen_convo} Let $r\ge 2$ and let
$h:\N^r\to \C$, $g:\N^r\to \C$, $f_j:\N\to \C$ \textup{($1\le j\le r$)} be arithmetic functions such that
\begin{equation*}
h(n_1,\ldots,n_r) = \sum_{d_1m_1=n_1,\ldots, d_rm_r=n_r} g(d_1,\ldots, d_r) f_1(m_1)\cdots f_r(m_r)
\end{equation*}
for every $n_1,\ldots,n_r\in \N$. Assume that

(i) there exist constants $0<b_j<a_j$ \textup{($1\le j\le r$)} such that
\begin{equation*}
F_j(x):= \sum_{n\le x} f_j(n)= x^{a_j}P_j(\log x)+ O(x^{b_j}) \quad (1\le j\le r),
\end{equation*}
where $P_j(u)$ are polynomials in $u$ of degrees $\delta_j$, with leading coefficients $K_j$ \textup{($1\le j\le r$)},

(ii) the Dirichlet series
\begin{equation*}
G(s_1,\ldots,s_r):= \sum_{n_1,\ldots, n_r=1}^{\infty} \frac{g(n_1,\ldots,n_r)}{n_1^{s_1}\cdots n_r^{s_r}}
\end{equation*}
is absolutely convergent for $(s_1,\ldots, s_r)=(a_1-\varepsilon,\ldots,a_{j-1}-\varepsilon, b_j-\varepsilon, a_{j+1}-\varepsilon,
\ldots, a_r-\varepsilon)$ for sufficiently small $\varepsilon>0$ and $1\le j\le r$.

Then the asymptotic formula
\begin{equation*}
\sum_{n_1,\ldots, n_r\le x} h(n_1,\ldots,n_r) = x^{a_1+\cdots+a_r} Q(\log x) + O(x^{a_1+\cdots+a_r -\Delta}(\log x)^{\delta_1+\cdots +\delta_r})
\end{equation*}
holds, where $Q(u)$ is a polynomial in $u$ of degree $\delta_1+\cdots +\delta_r$, with leading coefficient
$K_1\cdots K_r G(a_1,\ldots,a_r)$ and $\Delta =\min_{1\le j\le r} (a_j-b_j)$.
\end{theorem}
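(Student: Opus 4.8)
The plan is to prove Theorem \ref{Th_gen_convo} by the hyperbola/convolution method, summing the identity for $h$ over the box $[1,x]^r$ and estimating the resulting sum by splitting off the main term from $G$ evaluated near $(a_1,\ldots,a_r)$ and controlling the error by the absolute convergence hypothesis (ii). First I would write
\begin{equation*}
\sum_{n_1,\ldots,n_r\le x} h(n_1,\ldots,n_r) = \sum_{d_1,\ldots,d_r\le x} g(d_1,\ldots,d_r) \prod_{j=1}^r F_j(x/d_j),
\end{equation*}
where $F_j$ is the summatory function of $f_j$ from (i). Substituting the asymptotic $F_j(y)=y^{a_j}P_j(\log y)+O(y^{b_j})$ into each factor and expanding the product over the $2^r$ choices of "main term vs.\ error term" in each coordinate gives one principal term (all coordinates main) plus $2^r-1$ mixed terms.

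For the principal term I would collect
\begin{equation*}
\sum_{d_1,\ldots,d_r\le x} g(d_1,\ldots,d_r) \prod_{j=1}^r \left(\frac{x}{d_j}\right)^{a_j} P_j\!\left(\log\frac{x}{d_j}\right),
\end{equation*}
expand each $P_j(\log(x/d_j))$ by the binomial theorem in powers of $\log x$ and $\log d_j$, and recognize that for each monomial the $d$-sum is $x^{a_1+\cdots+a_r}$ times a polynomial in $\log x$ whose coefficients are finite values of $G$ and its partial derivatives at $(a_1,\ldots,a_r)$ — all of which converge absolutely by (ii), which in particular guarantees convergence at the point $(a_1,\ldots,a_r)$ (take $\varepsilon$ small and note absolute convergence on the larger polydisc). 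Extending each $d_j$-sum from $d_j\le x$ to $d_j=1,\ldots,\infty$ introduces a tail error: the tail where some $d_j>x$ is bounded, using $|g|$ summability, by $x^{a_1+\cdots+a_r}$ times something decaying like a negative power of $x$, which is absorbed into the stated error term. This produces the claimed polynomial $Q(\log x)$ of degree $\delta_1+\cdots+\delta_r$ with leading coefficient $K_1\cdots K_r\,G(a_1,\ldots,a_r)$ (the top-degree term arising only from the product of the leading terms of the $P_j$, with the $d$-sum giving exactly $G(a_1,\ldots,a_r)$).

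The mixed terms are where hypothesis (ii) does its real work and where I expect the main technical obstacle to lie: in a term where coordinate $j$ contributes its error $O((x/d_j)^{b_j})$ and the other coordinates $\ell\ne j$ contribute $(x/d_\ell)^{a_\ell}P_\ell(\cdots)$, I would bound it by
\begin{equation*}
\ll x^{b_j+\sum_{\ell\ne j} a_\ell}(\log x)^{\sum_{\ell\ne j}\delta_\ell}\sum_{d_1,\ldots,d_r\le x} \frac{|g(d_1,\ldots,d_r)|}{d_j^{b_j}\prod_{\ell\ne j} d_\ell^{a_\ell}},
\end{equation*}
where one must be slightly careful with the logarithmic factors $\log(x/d_j)$ when $b_j$ is replaced by $b_j-\varepsilon$: bounding $(x/d_j)^{b_j}\log^{c}(x/d_j)\ll (x/d_j)^{b_j}$ up to a negligible loss, or directly $\ll (x/d_j)^{b_j-\varepsilon/2}\cdot x^{\varepsilon/2}$, lets the $d$-sum become $\sum |g(d_1,\ldots,d_r)| d_j^{-(b_j-\varepsilon)}\prod_{\ell\ne j} d_\ell^{-(a_\ell-\varepsilon)}$, which is finite precisely by (ii). Hence this term is $O(x^{b_j+\sum_{\ell\ne j}a_\ell+\varepsilon})$; since $a_1+\cdots+a_r-(b_j+\sum_{\ell\ne j}a_\ell)=a_j-b_j\ge \Delta$, every mixed term is $O(x^{a_1+\cdots+a_r-\Delta+\varepsilon})$. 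Terms with two or more error coordinates are smaller still. Combining the principal term with all error contributions, and absorbing the $(\log x)$-powers appropriately (noting the leading power $\delta_1+\cdots+\delta_r$ comes with the error of the single worst coordinate), yields the asymptotic formula as stated. The delicate point throughout is simply the bookkeeping: keeping track of which polydisc of convergence is invoked for each of the $2^r$ pieces, and ensuring the $\varepsilon$-losses from the logarithmic factors are harmless — but this is routine given (ii) is formulated exactly to cover the $r$ "bad" directions $(a_1,\ldots,b_j,\ldots,a_r)$.
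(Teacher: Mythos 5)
Your proposal is correct and follows essentially the same route as the paper's proof: the same convolution identity $\sum_{n_j\le x}h=\sum_{d_j\le x}g\prod_j F_j(x/d_j)$, the same split into the all-main-term product plus mixed terms bounded via hypothesis (ii) at the points $(a_1,\ldots,b_j,\ldots,a_r)$, and the same expansion of $\prod_j P_j(\log x-\log d_j)$ with tails controlled by $\log d\ll d^{\varepsilon}$ and (ii). The only cosmetic difference is that the paper groups all $2^r-1$ mixed terms into a single error $\eta$ and keeps $(\log x)^{\delta_1+\cdots+\delta_r}$ rather than an $x^{\varepsilon}$ loss, which is what the stated error term requires; your sum $\sum|g|\,d_j^{-b_j}\prod_{\ell\ne j}d_\ell^{-a_\ell}$ already converges by (ii) and monotonicity, so the $\varepsilon$-detour is unnecessary.
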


\begin{theorem} \label{Th_tau_r} If $r\ge 2$, then
\begin{equation*} \label{form_tau_r}
\sum_{n_1,\ldots,n_r \le x} \tau(n_1\cdots n_r) = x^r P_r(\log x)+ O(x^{r-1+\theta+\varepsilon}),
\end{equation*}
\begin{equation*} \label{form_tau_lcm_r}
\sum_{n_1,\ldots,n_r \le x} \tau([n_1,\ldots, n_r]) = x^r Q_r(\log x)+ O(x^{r-1+\theta+\varepsilon}),
\end{equation*}
for every $\varepsilon >0$, where $\theta$ is the exponent in \eqref{Dir_divisor}, $P_r(t)$ and $Q_r(t)$ are polynomials in $t$ of
degree $r$ having the leading coefficients
\begin{equation} \label{K_P_r}
K_{P,r}:=\prod_p \left(1-\frac1{p} \right)^{r-1}
\left(1+\frac{r-1}{p}\right)
\end{equation}
and
\begin{equation*} \label{K_Q_r}
K_{Q,r}:= \prod_p \left(1-\frac1{p}\right)^{2r} \sum_{\nu_1,\ldots,\nu_r=0}^{\infty} \frac{\max(\nu_1,\ldots,\nu_r)+1}{p^{\nu_1+\cdots+ \nu_r}},
\end{equation*}
respectively.
\end{theorem}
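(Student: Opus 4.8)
The plan is to obtain both asymptotic formulas as immediate applications of Theorem~\ref{Th_gen_convo}, using Proposition~\ref{Prop_Dir_ser_tau_prod} with $k=1$ for the product and Proposition~\ref{Prop_Dir_ser_tau_lcm} with $k=1$ for the least common multiple. Since $\tau_{1,1}=\tau$ and $\zeta(s_j)\zeta(ks_j)=\zeta^2(s_j)$ when $k=1$, Proposition~\ref{Prop_Dir_ser_tau_prod} reads
\begin{equation*}
D(\tau(n_1\cdots n_r);s_1,\ldots,s_r)=\zeta^2(s_1)\cdots\zeta^2(s_r)\,F_{r,1}(s_1,\ldots,s_r),
\end{equation*}
so, comparing Dirichlet coefficients and using $\zeta^2(s)=\sum_{m=1}^{\infty}\tau(m)m^{-s}$,
\begin{equation*}
\tau(n_1\cdots n_r)=\sum_{d_1m_1=n_1,\ldots,d_rm_r=n_r} f_{r,1}(d_1,\ldots,d_r)\,\tau(m_1)\cdots\tau(m_r),
\end{equation*}
and similarly Proposition~\ref{Prop_Dir_ser_tau_lcm} gives $\tau([n_1,\ldots,n_r])=\sum_{d_1m_1=n_1,\ldots,d_rm_r=n_r} g_{r,1}(d_1,\ldots,d_r)\tau(m_1)\cdots\tau(m_r)$. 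I would therefore apply Theorem~\ref{Th_gen_convo} with $f_j=\tau$ for all $1\le j\le r$ and with $g=f_{r,1}$, respectively $g=g_{r,1}$.

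Next I would verify the two hypotheses. Hypothesis (i) holds by~\eqref{Dir_divisor}: $\sum_{n\le x}\tau(n)=x\log x+(2\gamma-1)x+O(x^{\theta+\varepsilon})$, so one may take $a_j=1$, $b_j=\theta+\varepsilon$, $P_j(u)=u+2\gamma-1$ of degree $\delta_j=1$ with leading coefficient $K_j=1$, and $0<b_j<a_j$ because $1/4\le\theta<1/3$. For hypothesis (ii), the points at which absolute convergence of $G$ is needed are those with one coordinate close to $\theta$ and all the others close to $1$; by Propositions~\ref{Prop_Dir_ser_tau_prod} and~\ref{Prop_Dir_ser_tau_lcm}, $F_{r,1}$ and $G_{r,1}$ converge absolutely as soon as $\Re s_j>0$ ($1\le j\le r$) and $\Re(s_j+s_\ell)>1$ ($1\le j<\ell\le r$), and at such a point every coordinate is positive while the sum of any two of them is at least $\theta+1-c\varepsilon$ for an absolute constant $c$, which exceeds $1$ for $\varepsilon$ small since $\theta>0$. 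Hence (ii) is satisfied.

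Theorem~\ref{Th_gen_convo} then yields, with $\Delta=1-\theta-\varepsilon$ and $\delta_1+\cdots+\delta_r=r$,
\begin{equation*}
\sum_{n_1,\ldots,n_r\le x}\tau(n_1\cdots n_r)=x^{r}P_r(\log x)+O\bigl(x^{r-1+\theta+\varepsilon}(\log x)^{r}\bigr),
\end{equation*}
the logarithmic factor being absorbed into $x^\varepsilon$; $P_r$ has degree $r$ with leading coefficient $K_1\cdots K_r\,F_{r,1}(1,\ldots,1)=F_{r,1}(1,\ldots,1)$, and identically $Q_r$ has degree $r$ with leading coefficient $G_{r,1}(1,\ldots,1)$. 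To match~\eqref{K_P_r}, I would use the exact identity for $k=1$ recorded just after Proposition~\ref{Prop_Dir_ser_tau_prod}: putting all $s_j=1$ collapses each inner sum over $1\le i_1<\cdots<i_j\le r$ to $\binom{r}{j}p^{-j}$, whence
\begin{equation*}
F_{r,1}(1,\ldots,1)=\prod_p\left(1+\sum_{j=2}^{r}(-1)^{j-1}(j-1)\binom{r}{j}\frac{1}{p^{j}}\right),
\end{equation*}
and the elementary identity $1+\sum_{j=2}^{r}(-1)^{j-1}(j-1)\binom{r}{j}t^{j}=(1-t)^{r-1}(1+(r-1)t)$ — obtained by multiplying $\frac{d}{dt}(1-t)^r$ by $t$ and subtracting $(1-t)^r$ — turns the right-hand side into $K_{P,r}$. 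For $G_{r,1}(1,\ldots,1)$ I would read the Euler factor of $D(\tau([n_1,\ldots,n_r]);s_1,\ldots,s_r)$ off directly: since $\tau(p^\nu)=\nu+1$ and $[p^{\nu_1},\ldots,p^{\nu_r}]=p^{\max(\nu_1,\ldots,\nu_r)}$, the $p$-th factor is $\sum_{\nu_1,\ldots,\nu_r=0}^{\infty}(\max(\nu_1,\ldots,\nu_r)+1)p^{-(\nu_1s_1+\cdots+\nu_rs_r)}$; dividing by $\zeta^2(s_1)\cdots\zeta^2(s_r)$ and setting all $s_j=1$ gives
\begin{equation*}
G_{r,1}(1,\ldots,1)=\prod_p\left(1-\frac{1}{p}\right)^{2r}\sum_{\nu_1,\ldots,\nu_r=0}^{\infty}\frac{\max(\nu_1,\ldots,\nu_r)+1}{p^{\nu_1+\cdots+\nu_r}}=K_{Q,r}.
\end{equation*}

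I do not expect a genuine obstacle: the theorem is essentially Theorem~\ref{Th_gen_convo} together with the Dirichlet-series factorizations of Section~2. The two steps that need slightly more than bookkeeping are the verification of hypothesis (ii) — which is painless precisely because the constraint $\Re(s_j+s_\ell)>1$ is never tight, $\theta$ being bounded away from $0$ so that $\theta+1>1$ with room to spare — and, the one mildly nontrivial point, the closed-form evaluation of the leading coefficient of $P_r$, whose only real ingredient is the binomial identity producing $K_{P,r}$.
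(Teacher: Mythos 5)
Your proof is correct and follows essentially the same route as the paper, which likewise obtains both formulas by applying Theorem~\ref{Th_gen_convo} with $f_j=\tau$, $a_j=1$, $b_j=\theta+\varepsilon$, $\delta_j=1$ and Propositions~\ref{Prop_Dir_ser_tau_prod} and~\ref{Prop_Dir_ser_tau_lcm} at $k=1$. The only material you add beyond the paper's one-line proof is the explicit verification of hypothesis (ii) and the binomial-identity evaluation $F_{r,1}(1,\ldots,1)=K_{P,r}$, both of which are carried out correctly.
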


Note that the constant $K_{P,r}$ defined by \eqref{K_P_r} equals the
asymptotic density of the set of $r$-tuples of positive integers
with pairwise relatively prime components. See
\cite{Tot2014,Tot2016}.

Furthermore, in the case $r=2$,
\begin{equation*} \label{K_r}
K_{Q,2}= \zeta(2) K_{P,3} =  \zeta(2) \prod_p \left(1-\frac1{p} \right)^2
\left(1+\frac{2}{p}\right).
\end{equation*}

\begin{theorem}  \label{Th_tau_star_r} If $r\ge 2$, then
\begin{equation*}
\sum_{n_1,\ldots,n_r\le x} \tau^*(n_1\cdots n_r)= x^r P^*_r(\log x)+ O(x^{r-1/2+\varepsilon}),
\end{equation*}
where $P^*_r(t)$ is a polynomials in $t$ of degree $r$ having the leading coefficient
\begin{equation*}
K_{P^*,r}:=\prod_p \left(1-\frac1{p} \right)^r \left(2- \left(1-\frac1{p} \right)^r \right).
\end{equation*}
\end{theorem}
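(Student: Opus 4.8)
The plan is to deduce the formula from the general convolution result Theorem \ref{Th_gen_convo}, applied with $f_j=\tau$ for every $1\le j\le r$, the function $\tau^*$ entering through Proposition \ref{Prop_Dir_ser_tau_star_prod}.

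First, let $g:\N^r\to\C$ be the (multiplicative) arithmetic function whose Dirichlet series is the Euler product $H_r(s_1,\ldots,s_r)$ of Proposition \ref{Prop_Dir_ser_tau_star_prod}; that proposition tells us $D(g;s_1,\ldots,s_r)=H_r(s_1,\ldots,s_r)$ is absolutely convergent for $\Re s_j>1/2$ \textup{($1\le j\le r$)}. Since $D(\tau;s)=\zeta^2(s)$, the identity of Proposition \ref{Prop_Dir_ser_tau_star_prod} says that in the region $\Re s_j>1$ the Dirichlet series of $(n_1,\ldots,n_r)\mapsto\tau^*(n_1\cdots n_r)$ equals $D(\tau;s_1)\cdots D(\tau;s_r)\,D(g;s_1,\ldots,s_r)$; comparing coefficients this is equivalent to the convolution identity
\[
\tau^*(n_1\cdots n_r)=\sum_{d_1m_1=n_1,\ldots,d_rm_r=n_r} g(d_1,\ldots,d_r)\,\tau(m_1)\cdots\tau(m_r),
\]
valid for all $n_1,\ldots,n_r\in\N$. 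Thus Theorem \ref{Th_gen_convo} applies with $h(n_1,\ldots,n_r)=\tau^*(n_1\cdots n_r)$, this $g$, and $f_j=\tau$.

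It remains to check the hypotheses and read off the conclusion. By \eqref{Dir_divisor}, $F_j(x)=\sum_{n\le x}\tau(n)=x(\log x+2\gamma-1)+O(x^{\theta+\varepsilon})$, so $a_j=1$, the polynomial $P_j(u)=u+2\gamma-1$ has degree $\delta_j=1$ and leading coefficient $K_j=1$; because $\theta<\tfrac13<\tfrac12$ we may take $b_j=\tfrac12+\eta$ with $\eta>0$ arbitrarily small, as $O(x^{\theta+\varepsilon})\subseteq O(x^{b_j})$. Hypothesis (ii) asks for absolute convergence of $G=H_r$ at the point whose coordinates are all $1-\varepsilon$ except for the $j$-th, which is $b_j-\varepsilon=\tfrac12+\eta-\varepsilon$; for $0<\varepsilon<\eta$ every coordinate exceeds $\tfrac12$, so this holds by Proposition \ref{Prop_Dir_ser_tau_star_prod}. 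Theorem \ref{Th_gen_convo} then gives $\sum_{n_1,\ldots,n_r\le x}\tau^*(n_1\cdots n_r)=x^r Q(\log x)+O(x^{r-\Delta}(\log x)^r)$ with $\Delta=\min_j(a_j-b_j)=\tfrac12-\eta$ and $Q$ of degree $\delta_1+\cdots+\delta_r=r$ and leading coefficient $K_1\cdots K_r\,G(1,\ldots,1)=H_r(1,\ldots,1)=\prod_p(1-1/p)^r\bigl(2-(1-1/p)^r\bigr)=K_{P^*,r}$. Letting $\eta\to0$ and absorbing the power of $\log x$ into $x^\varepsilon$ turns the error into $O(x^{r-1/2+\varepsilon})$ and gives the theorem with $P^*_r=Q$.

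The argument is essentially bookkeeping once Theorem \ref{Th_gen_convo} and Proposition \ref{Prop_Dir_ser_tau_star_prod} are in hand; the only subtle point — and the place I would be careful — is that we are \emph{not} allowed to take $b_j$ as small as $\theta+\varepsilon$, even though \eqref{Dir_divisor} would permit that as an error exponent, since $H_r$ converges only for $\Re s_j>\tfrac12$ and hypothesis (ii) therefore forces $b_j>\tfrac12$. This cap is exactly what limits the saving to $x^{1/2}$ and explains why, in contrast to Theorem \ref{Th_tau_r} for $\tau$, sharpenings of the Dirichlet divisor problem do not improve the exponent here.
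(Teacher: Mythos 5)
Your proof is correct and follows essentially the same route as the paper: apply Theorem \ref{Th_gen_convo} with the factorization coming from Proposition \ref{Prop_Dir_ser_tau_star_prod}, with the $1/2$ barrier for $b_j$ imposed by the convergence region of the Euler factor. The only (immaterial) difference is that the paper takes $f_j=\tau^*$ with $b_j=1/2+\varepsilon$ and $G=\zeta(2s_1)\cdots\zeta(2s_r)H_r$, whereas you take $f_j=\tau$ and $G=H_r$; both choices yield the same leading coefficient $H_r(1,\ldots,1)=K_{P^*,r}$ and the same error exponent, and your closing observation about why the divisor-problem exponent $\theta$ cannot help here is exactly right.
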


It is easier to derive asymptotic formulas for similar sums involving the greatest common divisor
$(n_1,\ldots,n_r)$. For example, we have the following result.

\begin{theorem} \label{Th_tau_1_k_gcd_r} Let $r\ge 2$ and $k\ge 1$. Then
\begin{equation} \label{tau_1_k_gcd_r}
\sum_{n_1,\ldots,n_r\le x} \tau_{1,k}((n_1,\ldots,n_r))= \zeta(kr)x^r + O(R_{r,k}(x)),
\end{equation}
where $R_{r,k}(x)=x^{r-1}$ \textup{($rk>2$)} and $R_{2,1}(x)= x\log x$.
\end{theorem}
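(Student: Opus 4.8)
The plan is to reduce the left-hand side to an elementary lattice-point count and estimate it directly. The starting observation is the identity
\begin{equation*}
\tau_{1,k}(m)=\#\{d\in\N:\ d^k\mid m\},
\end{equation*}
which is immediate from the definition $\tau_{1,k}(m)=\sum_{ab^k=m}1$: the solutions $(a,b)$ correspond bijectively to the divisors $b$ with $b^k\mid m$, via $a=m/b^k$. Since $d^k$ divides the gcd $(n_1,\ldots,n_r)$ exactly when $d^k\mid n_j$ for every $j$, interchanging the order of summation gives
\begin{equation*}
\sum_{n_1,\ldots,n_r\le x}\tau_{1,k}((n_1,\ldots,n_r))
=\sum_{d\le x^{1/k}}\left(\#\{n\le x:\ d^k\mid n\}\right)^r
=\sum_{d\le x^{1/k}}\left\lfloor\frac{x}{d^k}\right\rfloor^r .
\end{equation*}

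Next I would expand each $r$-th power using $\lfloor y\rfloor^r=(y-\{y\})^r=y^r+O_r(y^{r-1})$, valid for $y\ge 1$, to obtain
\begin{equation*}
\sum_{d\le x^{1/k}}\left\lfloor\frac{x}{d^k}\right\rfloor^r
=x^r\sum_{d\le x^{1/k}}\frac1{d^{kr}}+O\left(x^{r-1}\sum_{d\le x^{1/k}}\frac1{d^{k(r-1)}}\right).
\end{equation*}
Since $kr\ge 2$, completing the first sum to $\zeta(kr)$ costs $x^r\sum_{d>x^{1/k}}d^{-kr}\ll x^r\cdot x^{(1-kr)/k}=x^{1/k}\ll x^{r-1}$, so the main term is exactly $\zeta(kr)x^r$.

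It remains to bound the error sum, and here I would split into the two regimes. For integers $k\ge 1$, $r\ge 2$ one always has $k(r-1)\ge 1$, with equality precisely when $(k,r)=(1,2)$; moreover $k(r-1)>1$ is equivalent to $rk>2$. If $rk>2$ then $\sum_{d\le x^{1/k}}d^{-k(r-1)}=O(1)$, and the total error is $O(x^{r-1})$. If $(k,r)=(1,2)$ then $\sum_{d\le x}d^{-1}=O(\log x)$, giving the error $O(x\log x)$, i.e.\ $R_{2,1}(x)=x\log x$.

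No genuine difficulty arises: the argument is entirely elementary, and the only step needing care is the bookkeeping of the case distinction — in particular verifying that $k(r-1)>1\Leftrightarrow rk>2$ for $k\ge 1$, $r\ge 2$, with the sole exceptional pair $(1,2)$. It is worth noting that the general convolution machinery of Theorem~\ref{Th_gen_convo} does not reach this bound here: the one-variable inputs are $F_j(x)=\sum_{n\le x}1=x+O(1)$, which forces $b_j$ to be taken as $\varepsilon$ and would only produce an error term $O(x^{r-1+\varepsilon})$, so the direct computation above is the natural and sharper route.
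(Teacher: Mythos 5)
Your proposal is correct and follows essentially the same route as the paper: both reduce the sum to $\sum_{d\le x^{1/k}}\lfloor x/d^k\rfloor^r$ (the paper via $\tau_{1,k}=\1*(\mu*\tau_{1,k})$ with $\mu*\tau_{1,k}$ the indicator of $k$-th powers, you via the equivalent direct count of $d$ with $d^k\mid(n_1,\ldots,n_r)$), then expand the floor and estimate the resulting sums elementarily. Your case analysis $k(r-1)>1\Leftrightarrow rk>2$ correctly supplies the "standard elementary estimates" the paper leaves implicit.
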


\section{Proofs}

\begin{proof}[Proof of Proposition {\rm \ref{Prop_Dir_ser_tau_prod}}]
The function $n\mapsto \tau_{1,k}(n)$ is multiplicative and $\tau_{1,k}(p^\nu)=\lfloor \nu/k \rfloor +1$ for every prime power
$p^{\nu}$ ($\nu \ge 0$). The function $(n_1,\ldots,n_r)\mapsto \tau_{1,k}(n_1\cdots n_r)$ is multiplicative,
viewed as a function of $r$ variables. Therefore, its multiple Dirichlet series can be expanded into an Euler product.
We obtain
\begin{equation*}
\sum_{n_1,\ldots, n_r=1}^{\infty} \frac{\tau_{1,k}(n_1\cdots n_r)}{n_1^{s_1}\cdots n_r^{s_r}} = \prod_p
\sum_{\nu_1,\ldots,\nu_r=0}^{\infty} \frac{\tau_{1,k}(p^{\nu_1+\cdots +\nu_r})}{p^{\nu_1 s_1+\cdots +\nu_r s_r}}
\end{equation*}
\begin{equation*}
= \prod_p \sum_{\nu_1,\ldots,\nu_r=0}^{\infty} \frac{\lfloor (\nu_1 +\cdots +\nu_r)/k \rfloor +1}{p^{\nu_1 s_1+\cdots +\nu_r s_r}}
\end{equation*}
\begin{equation*}
= \zeta(s_1)\zeta(ks_1)\cdots \zeta(s_r)\zeta(ks_r) F_{r,k}(s_1,\ldots,s_r),
\end{equation*}
where
\begin{equation*}
F_{r,k}(s_1,\ldots,s_r)
\end{equation*}
\begin{equation*}
= \prod_p \left(1-\frac1{p^{s_1}}\right) \left(1-\frac1{p^{ks_1}}\right)\cdots \left(1-\frac1{p^{s_r}}\right)
\left(1-\frac1{p^{ks_r}}\right)
\end{equation*}
\begin{equation*}
\times \sum_{\nu_1,\ldots,\nu_r=0}^{\infty} \frac{\lfloor
(\nu_1 +\cdots +\nu_r)/k \rfloor +1}{p^{\nu_1 s_1+\cdots +\nu_r s_r}}
\end{equation*}
\begin{equation*}
= \prod_p \left(1-\frac1{p^{s_1}}- \frac1{p^{ks_1}} +\frac1{p^{(k+1)s_1}}\right) \cdots
\left(1-\frac1{p^{s_r}}- \frac1{p^{ks_r}}+\frac1{p^{(k+1)s_r}} \right)
\end{equation*}
\begin{equation*}
\times \sum_{\nu_1,\ldots,\nu_r=0}^{\infty}
\frac{\lfloor (\nu_1 +\cdots +\nu_r)/k \rfloor +1}{p^{\nu_1 s_1+\cdots +\nu_r s_r}}
\end{equation*}
\begin{equation*}
= \prod_p \left(1+ \sum_{\substack{\nu_1,\ldots,\nu_r=0\\ \# A(\nu_1,\ldots,\nu_r)\ge 2 }}^{\infty}
\frac{c_{\nu_1,\ldots,\nu_r}}{p^{\nu_1 s_1+\cdots +\nu_r s_r}} \right)
\end{equation*}
with some coefficients $c_{\nu_1,\ldots,\nu_r}$, where $A(\nu_1,\ldots,\nu_r):=\{j: 1\le j\le r, \nu_j\ne 0 \}$. Here the coefficient
$c$ of $1/p^{\ell s_j}$ (the case $\nu_t=0$ for all $t\ne j$ and $\nu_j=\ell$) is zero for every $1\le j\le r$ and $\ell \ge 1$. Indeed,
\begin{equation*}
c=\begin{cases}
\left(\lfloor \frac{\ell}{k} \rfloor + 1\right) - \left(\lfloor \frac{\ell-1}{k} \rfloor +1\right)=0, & \text{if $1\le \ell \le k-1$}, \\
\left(\lfloor \frac{k}{k} \rfloor + 1\right) - \left( \lfloor \frac{k-1}{k} \rfloor +1\right) -1 =0, &\text{if $\ell=k$}, \\
\left(\lfloor \frac{\ell}{k} \rfloor + 1\right) - \left(\lfloor \frac{\ell-1}{k}\rfloor +1\right)- \left(\lfloor \frac{\ell-k}{k} \rfloor +1 \right)
\\ + \left(\lfloor \frac{\ell-k-1}{k} \rfloor +1\right) =0, & \text{if $\ell \ge k+1$}.
\end{cases}
\end{equation*}

Hence a sufficient condition of absolute convergence of $F_{r,k}(s_1,\ldots,s_r)$ is that $\Re s_j >0$
($1\le j\le r$) and $\Re(s_j+s_{\ell})>1$ ($1\le j<\ell \le r$). Note that another sufficient condition for absolute convergence
is $\Re s_j >1/2$ ($1\le j\le r$), which can not be used in the proof of the corresponding asymptotic formula.
\end{proof}

\begin{proof}[Proof of Proposition {\rm \ref{Prop_Dir_series_r_2}}]
In the case $r=2$ we have
\begin{equation*}
D_k(s_1,s_2):= \sum_{n_1,n_2=1}^{\infty} \frac{\tau_{1,k}(n_1n_2)}{n_1^{s_1}n_2^{s_2}}
=  \prod_p \sum_{\nu_1,\nu_2=0}^{\infty} \frac{\lfloor (\nu_1+\nu_2)/k \rfloor +1}{p^{\nu_1 s_1+\nu_2 s_2}}
\end{equation*}
\begin{equation*}
=  \prod_p \sum_{j=0}^{\infty} \sum_{\substack{\nu_1,\nu_2=0\\jk\le \nu_1+\nu_2 \le (j+1)k-1}}^{\infty} \frac{j +1}{p^{\nu_1 s_1+\nu_2 s_2}}
=  \prod_p \sum_{j=0}^{\infty} (j+1) \sum_{\ell=jk}^{(j+1)k-1}
\frac1{p^{\ell s_2}} \sum_{\nu_1=0}^{\ell}
\frac1{p^{\nu_1(s_1-s_2)}}.
\end{equation*}

Let $x=p^{-s_1}$, $y=p^{-s_2}$. We deduce that $D_k(s_1,s_2)=  \prod_p
S_k(x,y)$, where
\begin{equation*}
S_k(x,y)=  \sum_{j=0}^{\infty} (j+1) \sum_{\ell=jk}^{(j+1)k-1} y^\ell \sum_{a=0}^{\ell} \left(\frac{x}{y}\right)^a
\end{equation*}
\begin{equation*}
= \sum_{j=0}^{\infty} (j+1) \sum_{\ell=jk}^{(j+1)k-1} y^\ell \left(1-\left(\frac{x}{y}\right)^{\ell+1}\right)\left(1-\frac{x}{y} \right)^{-1}
\end{equation*}
\begin{equation*}
=  \left(1-\frac{x}{y} \right)^{-1} \sum_{j=0}^{\infty} (j+1) \left( \sum_{\ell=jk}^{(j+1)k-1} y^\ell -\frac{x}{y}
\sum_{\ell=jk}^{(j+1)k-1} x^{\ell} \right)
\end{equation*}
\begin{equation*}
=  \left(1-\frac{x}{y} \right)^{-1} \left(\frac{1-y^k}{1-y}
\sum_{j=0}^{\infty} (j+1)y^{jk} -\frac{x}{y}\cdot \frac{1-x^k}{1-x}
\sum_{j=0}^{\infty} (j+1)x^{jk} \right)
\end{equation*}
\begin{equation*}
= \left(1-\frac{x}{y} \right)^{-1} \frac{(1-x)(1-x^k)-xy^{-1}(1-y)(1-y^k)}{(1-x)(1-y)(1-x^k)(1-y^k)} \\
\end{equation*}
\begin{equation*}
= \frac1{(1-x)(1-y)(1-x^k)(1-y^k)} \left(1+ \sum_{t=1}^{k-1} x^t
y^{k-t} - \sum_{t=1}^k x^t y^{k+1-t}\right).
\end{equation*}

This gives the result.
\end{proof}

\begin{proof}[Proof of Proposition {\rm \ref{Prop_Dir_ser_tau_lcm}}]
Similar to the proof of Proposition \ref{Prop_Dir_ser_tau_prod}. The function $(n_1,\ldots,n_r)\mapsto \tau_{1,k}([n_1,\ldots, n_r])$
is also multiplicative. Its multiple Dirichlet series can be written as
\begin{equation*}
\sum_{n_1,\ldots, n_r=1}^{\infty} \frac{\tau_{1,k}([n_1,\ldots, n_r])}{n_1^{s_1}\cdots n_r^{s_r}} = \prod_p
\sum_{\nu_1,\ldots,\nu_r=0}^{\infty} \frac{\tau_{1,k}(p^{\max(\nu_1,\ldots, \nu_r)})}{p^{\nu_1 s_1+\cdots +\nu_r s_r}}
\end{equation*}
\begin{equation*}
= \prod_p \sum_{\nu_1,\ldots,\nu_r=0}^{\infty} \frac{\lfloor \max(\nu_1, \ldots, \nu_r)/k \rfloor +1}{p^{\nu_1 s_1+\cdots +\nu_r s_r}}
\end{equation*}
\begin{equation*}
= \zeta(s_1)\zeta(ks_1)\cdots \zeta(s_r)\zeta(ks_r) G_{r,k}(s_1,\ldots,s_r),
\end{equation*}
where
\begin{equation*}
G_{r,k}(s_1,\ldots,s_r)
\end{equation*}
\begin{equation*}
= \prod_p \left(1-\frac1{p^{s_1}}- \frac1{p^{ks_1}} +\frac1{p^{(k+1)s_1}}\right) \cdots
\left(1-\frac1{p^{s_r}}- \frac1{p^{ks_r}}+\frac1{p^{(k+1)s_r}} \right)
\end{equation*}
\begin{equation*}
\times \sum_{\nu_1,\ldots,\nu_r=0}^{\infty}
\frac{\lfloor \max(\nu_1,\ldots,\nu_r)/k \rfloor +1}{p^{\nu_1 s_1+\cdots +\nu_r s_r}}
\end{equation*}
\begin{equation*}
= \prod_p \left(1+ \sum_{\substack{\nu_1,\ldots,\nu_r=0\\ \# A(\nu_1,\ldots,\nu_r)\ge 2 }}^{\infty}
\frac{c'_{\nu_1,\ldots,\nu_r}}{p^{\nu_1 s_1+\cdots +\nu_r s_r}} \right),
\end{equation*}
since here the coefficient $c'$ of $1/p^{\ell s_j}$ equals the coefficient $c$ of $1/p^{\ell s_j}$ in $F_{r,k}(s_1,\ldots,s_r)$, which vanishes,
as explained in the proof of Proposition \ref{Prop_Dir_ser_tau_prod}.
\end{proof}

\begin{proof}[Proof of Proposition {\rm \ref{Prop_Dir_ser_tau_exp_prod_lcm}}]
Similar to the proof of Proposition \ref{Prop_Dir_ser_tau_prod}.
Since $\tau^{(e)}(p^\nu)=\tau(\nu)=\tau_{1,2}(p^\nu)$ for $\nu \in
\{1,2,3,4\}$, in $T_r(s_1,\ldots,s_r)$ and $V_r(s_1,\ldots,s_r)$ the
coefficients of the terms $1/p^{\ell s_j}$ will be zero for every
$1\le j\le r$ and $\ell \in \{1,2,3,4\}$. However,
$\tau^{(e)}(p^5)=\tau(5)=2\ne 3=\tau_{1,2}(p^5)$, therefore the
coefficients of the terms $1/p^{5s_j}$ will not vanish (they are
$-1$ for every $1\le j\le r$). Hence a sufficient condition for absolute convergence
is that $\Re s_j >1/5$ {\rm ($1\le j\le r$)} and $\Re (s_j+s_{\ell}) >1$ {\rm ($1\le j< \ell \le r$)}.
\end{proof}

\begin{proof}[Proof of Proposition {\rm \ref{Prop_Dir_ser_tau_star_prod}}]
The function $(n_1,\ldots,n_r)\mapsto \tau^*(n_1\cdots n_r)$ is
multiplicative. Its multiple Dirichlet series can be written as
\begin{equation*}
\sum_{n_1,\ldots, n_r=1}^{\infty} \frac{\tau^*(n_1\cdots
n_r)}{n_1^{s_1}\cdots n_r^{s_r}} = \prod_p
\sum_{\nu_1,\ldots,\nu_r=0}^{\infty} \frac{\tau^*(p^{\nu_1+\cdots +
\nu_r)})}{p^{\nu_1 s_1+\cdots +\nu_r s_r}}
\end{equation*}
\begin{equation*}
= \prod_p \left(1+ \sum_{\substack{\nu_1,\ldots,\nu_r=0\\
\nu_1+\cdots +\nu_r\ge 1}}^{\infty} \frac{2}{p^{\nu_1 s_1+\cdots
+\nu_r s_r}}\right)
\end{equation*}
\begin{equation*}
= \prod_p \left(2 \left(1-\frac1{p^{s_1}}\right)^{-1}\cdots
\left(1-\frac1{p^{s_r}}\right)^{-1} -1\right)
\end{equation*}
\begin{equation*}
=\zeta^2(s_1)\cdots \zeta^2(s_r) H_r(s_1,\ldots,s_r),
\end{equation*}
where in $H_r(s_1,\ldots,s_r)$ the coefficients of $1/p^{s_j}$ are zero ($1\le j\le r$).
\end{proof}

\begin{proof}[Proof of Theorem {\rm \ref{Th_asympt_tau_1k}}]
We prove formula \eqref{asympt_tau_1k}. According to Proposition \ref{Prop_Dir_ser_tau_prod},
\begin{equation} \label{conv_id_tau_1k}
\tau_{1,k}(n_1\cdots n_r) = \sum_{d_1m_1=n_1,\ldots, d_rm_r=n_r} f_{r,k}(d_1,\ldots, d_r)\tau_{1,k}(m_1)\cdots \tau_{1,k}(m_r)
\end{equation}
for every $n_1,\ldots,n_r\in \N$, where $f_{r,k}$ is a multiplicative function and symmetric in the variables. Therefore,
\begin{equation*}
S_{r,k}(x):= \sum_{n_1,\ldots, n_r\le x} \tau_{1,k}(n_1\cdots n_r) = \sum_{d_1,\ldots, d_r\le x} f_{r,k}(d_1,\ldots, d_r)
\prod_{j=1}^r \sum_{m_j\le x/d_j} \tau_{1,k}(m_j).
\end{equation*}

For $k\ge 2$ we deduce by \eqref{tau_1dim} that
\begin{equation*}
S_{r,k}(x)
\end{equation*}
\begin{equation} \label{factors}
= \sum_{d_1,\ldots, d_r\le x} f_{r,k}(d_1,\ldots, d_r) \prod_{j=1}^r \left(\zeta(k) \frac{x}{d_j} + \zeta(1/k)
\left(\frac{x}{d_j}\right)^{1/k} + O\left(\left(\frac{x}{d_j}\right)^{\theta_k+\varepsilon}\right)\right).
\end{equation}

Here the main term will be
\begin{equation*}
M_{r,k}(x):= (\zeta(k))^r x^r \sum_{d_1,\ldots, d_r\le x} \frac{f_{r,k}(d_1,\ldots, d_r)}{d_1\cdots d_r}
\end{equation*}
\begin{equation*}
=(\zeta(k))^r x^r \sum_{d_1,\ldots, d_r=1}^{\infty} \frac{f_{r,k}(d_1,\ldots, d_r)}{d_1\cdots d_r} + R_{r,k}(x)=
(\zeta(k))^r x^r F_{r,k}(1,\ldots,1) + R_{r,k}(x),
\end{equation*}
where $F_{r,k}(1,\ldots,1)$ is convergent and its value is by \eqref{conv_id_tau_1k},
\begin{equation*}
F_{r,k}(1,\ldots,1)= \prod_p \left(1-\frac1{p} \right)^r \left(1-\frac1{p^k} \right)^r \sum_{\nu_1,\ldots,\nu_r=0}^{\infty}
\frac{\tau_{1,k}(p^{\nu_1 +\cdots +\nu_r})}{p^{\nu_1+\cdots+ \nu_r}}
\end{equation*}
\begin{equation*}
=(\zeta(k))^{-r} \prod_p \left(1-\frac1{p} \right)^r \sum_{\nu_1,\ldots,\nu_r=0}^{\infty}
\frac{\lfloor (\nu_1 +\cdots +\nu_r)/k \rfloor +1}{p^{\nu_1+\cdots+ \nu_r}},
\end{equation*}
while
\begin{equation*}
R_{r,k}(x) \ll x^r  \sideset{}{'}\sum_{} \frac{|f_{r,k}(d_1,\ldots, d_r)|}{d_1\cdots d_r},
\end{equation*}
$\sum^{'}$ meaning that $d_1,\ldots,d_r\le x$ does not hold. That is, there is at least one $t$ such that $d_t>x$. We can assume,
without restricting the generality, that $t=1$. We obtain that
\begin{equation*}
\sideset{}{'}\sum_{d_1>x} \frac{|f_{r,k}(d_1,\ldots, d_r)|}{d_1\cdots d_r}= \sideset{}{'}\sum_{d_1>x}
\frac{|f_{r,k}(d_1,\ldots, d_r)|}{d_1^{\varepsilon}d_2\cdots d_r}
\cdot \frac1{d_1^{1-\varepsilon}}
\end{equation*}
\begin{equation*}
\ll \frac1{x^{1-\varepsilon}} \sum_{d_1,\ldots,d_r=1}^{\infty}
\frac{|f_{r,k}(d_1,\ldots, d_r)|}{d_1^{\varepsilon}d_2\cdots d_r}\ll \frac1{x^{1-\varepsilon}},
\end{equation*}
since the latter series is $F_{r,k}(\varepsilon,1,\ldots,1)$, which converges by Proposition \ref{Prop_Dir_ser_tau_prod}. This gives
$R_{r,k}(x) \ll x^{r-1+\varepsilon}$ and
\begin{equation} \label{main_term}
M_{r,k}(x)=A_{k,r} x^r + O(x^{r-1+\varepsilon}).
\end{equation}

By multiplying in \eqref{factors} the terms $\zeta(k)\frac{x}{d_j}$ ($1\le j\le r-1$) and $\zeta(1/k)(\frac{x}{d_r})^{1/k}$ we have
\begin{equation*}
(\zeta(k))^{r-1}\zeta(1/k) x^{r-1+1/k} \sum_{d_1,\ldots, d_r\le x} \frac{f_{r,k}(d_1,\ldots, d_r)}{d_1\cdots d_{r-1}d_r^{1/k}}
\end{equation*}
\begin{equation*}
=(\zeta(k))^{r-1}\zeta(1/k) x^{r-1+1/k} \sum_{d_1,\ldots, d_r=1}^{\infty} \frac{f_{r,k}(d_1,\ldots, d_r)}{d_1\cdots d_{r-1}d_r^{1/k}} + T_{r,k}(x)
\end{equation*}
\begin{equation*}
= (\zeta(k))^{r-1} \zeta(1/k) x^{r-1+1/k} F_{r,k}(1,\ldots,1,1/k) + T_{r,k}(x),
\end{equation*}
where $F_{r,k}(1,\ldots,1,1/k)$ is convergent and
\begin{equation*}
T_{r,k}(x) \ll x^{r-1+1/k}  \sideset{}{'}\sum_{} \frac{|f_{r,k}(d_1,\ldots, d_r)|}{d_1\cdots d_{r-1}d_r^{1/k}}
\end{equation*}
with $\sum^{'}$ as above. There are two cases. Case I: Assuming that $d_r>x$ we deduce
\begin{equation*}
\sideset{}{'}\sum_{d_r>x} \frac{|f_{r,k}(d_1,\ldots, d_r)|}{d_1\cdots d_{r-1}d_r^{1/k}} = \sideset{}{'}\sum_{d_r>x}
\frac{|f_{r,k}(d_1,\ldots, d_r)|}{d_1 \cdots d_{r-1} d_r^{\varepsilon}}
\cdot \frac1{d_r^{1/k-\varepsilon}}
\end{equation*}
\begin{equation*}
\ll \frac1{x^{1/k -\varepsilon}} \sum_{d_1,\ldots,d_r=1}^{\infty}
\frac{|f_{r,k}(d_1,\ldots, d_r)|}{d_1\cdots d_{r-1}d_r^{\varepsilon}} \ll \frac1{x^{1/k-\varepsilon}}.
\end{equation*}

Case II: If $d_r\le x$, then there is a $t\in \{1,\ldots,r-1\}$ such that $d_t>x$. We deduce by taking $t=1$,
\begin{equation*}
\sideset{}{'}\sum_{\substack{d_1>x\\ d_r\le x}} \frac{|f_{r,k}(d_1,\ldots, d_r)|}{d_1d_2\cdots d_{r-1}d_r^{1/k}} = \sideset{}{'}
\sum_{\substack{d_1>x\\ d_r\le x}} \frac{|f_{r,k}(d_1,\ldots, d_r)|}{d_1^{\varepsilon}d_2\cdots d_{r-1}d_r}
\cdot \frac{d_r^{1-1/k}}{d_1^{1-\varepsilon}}
\end{equation*}
\begin{equation*}
\ll \frac{x^{1-1/k}}{x^{1-\varepsilon}} \sum_{d_1,\ldots,d_r=1}^{\infty}
\frac{|f_{r,k}(d_1,\ldots, d_r)|}{d_1^{\varepsilon}d_2\cdots d_{r-1} d_r}\ll \frac1{x^{1/k-\varepsilon}}.
\end{equation*}

Hence
\begin{equation*}
T_{r,k}(x) \ll x^{r-1+\varepsilon},
\end{equation*}
the same error as in \eqref{main_term}.

All the terms obtained by multiplying in \eqref{factors} $\zeta(k)\frac{x}{d_j}$ ($j\in \{1,\ldots,r\}\setminus \{t\}$)
and $\zeta(1/k)(\frac{x}{d_t})^{1/k}$ are of the same size and give together
\begin{equation} \label{second_term}
B_{r,k} x^{r-1+1/k} + O(x^{r-1+\varepsilon}),
\end{equation}
where
\begin{equation*}
B_{r,k}= r (\zeta(k))^{r-1} \zeta(1/k)F_{r,k}(1,\ldots,1,1/k).
\end{equation*}

Now, if in \eqref{factors} we take an error term, say $O((\frac{x}{d_r})^{\theta_{k}+\varepsilon})$, then we have to consider
$\zeta(k)\frac{x}{d_j}$ ($1\le j\le r-1$) to obtain, by multiplying, the largest term, which is
\begin{equation*}
\ll x^{r-1+\theta_k+\varepsilon} \sum_{d_1,\ldots,d_r\le x}
\frac{|f_{r,k}(d_1,\ldots, d_r)|}{d_1\cdots d_{r-1} d_r^{\theta_k+\varepsilon}}
\end{equation*}
\begin{equation*}
\ll x^{r-1+\theta_k+\varepsilon} \sum_{d_1,\ldots,d_r=1}^{\infty}
\frac{|f_{r,k}(d_1,\ldots, d_r)|}{d_1\cdots d_{r-1} d_r^{\theta_k+\varepsilon}},
\end{equation*}
giving the error
\begin{equation} \label{final_error}
x^{r-1+\theta_k+\varepsilon},
\end{equation}
since the involved series is convergent.

Therefore, \eqref{asympt_tau_1k} follows by \eqref{main_term}, \eqref{second_term} and \eqref{final_error}.

The proof of \eqref{asympt_tau_ik_lcm} is by similar arguments, based on Proposition \ref{Prop_Dir_ser_tau_lcm}.
\end{proof}

\begin{proof}[Proof of Theorem {\rm \ref{Th_asympt_tau_exp}}]
Similar to the proof of Theorem \ref{Th_asympt_tau_1k} by selecting $k=2$, using Proposition \ref{Prop_Dir_ser_tau_exp_prod_lcm} and the
fact that the behavior of $\tau^{(e)}$ is similar to $\tau_{1,2}(n)$, as explained before.
\end{proof}

\begin{proof}[Proof of Theorem {\rm \ref{Th_gen_convo}}]
For each $1\leq j\leq r,$ we write by condition (i),
$$F_j(x)=M_j(x)+E_j(x),$$
where
$$ M_j(x)=x^{a_j}P_j(\log x),\ \  E_j(x)= O(x^{b_j}).$$

Then we have
\begin{align} \label{sum_h}
\sum_{n_1,\ldots, n_r\le x} h(n_1\cdots n_r)& = \sum_{d_1,\ldots, d_r\le x} g(d_1,\ldots, d_r)
\prod_{j=1}^r F_j(x/d_j)\\
&
=  \sum_{d_1,\ldots, d_r\le x} g(d_1,\ldots, d_r) \prod_{j=1}^r \left(M_j(x/d_j)+E_j(x/d_j) \right).\nonumber
\end{align}

It is easy to see that we can write
\begin{align} \label{easy}
&\prod_{j=1}^r \left(M_j(x/d_j)+E_j(x/d_j) \right)=\prod_{j=1}^r  M_j(x/d_j)+\eta(x;d_1,\ldots,d_r),\\
&\eta(x;d_1,\ldots,d_r)\ll \sum_{j=1}^r\left(\frac{x}{d_j}\right)^{b_j} \prod_{\substack{1\leq k\leq r\\ k\not= j}}
\left(\frac{x}{d_k}\right)^{a_k}\times (\log x)^{\delta_1+\cdots+\delta_r}.\nonumber
\end{align}

Let $$L_j(x):=x^{a_1+\cdots+a_{j-1}+b_j+a_{j+1}+\cdots +a_r} \quad (1\leq j\leq r).$$

The contribution of $\eta(x;d_1,\ldots,d_r)$ is
\begin{align} \label{contrib}
&\ll \sum_{d_1,\ldots, d_r\le x} |g(d_1,\ldots, d_r)|\times |\eta(x;d_1,\cdots,d_r)|\\
&\ll (\log x)^{\delta_1+\cdots+\delta_r} \sum_{j=1}^r L_j(x) \sum_{d_1,\ldots, d_r\le x}
\frac{|g(d_1,\ldots, d_r)|}{d_1^{a_1}\cdots d_{j-1}^{a_{j-1}}d_j^{b_j}d_{j+1}^{a_{j+1}}\cdots d_r^{a_r}}\nonumber\\
& \ll  x^{a_1+\cdots+a_r-\Delta}(\log x)^{\delta_1+\cdots+\delta_r},\nonumber
\end{align}
where we used condition (ii).

Now we evaluate the sum
$$ M(x):= \sum_{d_1,\ldots, d_r\le x} g(d_1,\ldots, d_r)
\prod_{j=1}^r M_j\left(\frac{x}{d_j}\right).$$

Since $M_j(u)=x^{a_j}P_j(u)$ with $P_j(u)$ a polynomial in $u$ of degree $\delta_j,$ we have
\begin{align*}
\prod_{j=1}^r M_j\left(\frac{x}{d_j}\right) = \frac{x^{a_1+\cdots +a_r}}{d_1^{a_1}\cdots d_r^{a_r}}
\sum_{\ell=0}^{\delta_1+\cdots +\delta_r}C_{\ell}(\log d_1,\ldots, \log d_r) (\log x)^{\ell},
\end{align*}
where
$$ C_{\ell}(\log d_1,\ldots, \log d_r)=\sum_{j_1,\ldots,j_r} c(j_1,\ldots,j_r) (\log d_1)^{j_1} \cdots (\log d_r)^{j_r},
$$
the sum being over $0\leq j_t\leq \delta_t$ ($1\leq t\leq r$). So we have
\begin{align} \label{M_x}
M(x)&= x^{a_1+\cdots +a_r}
\sum_{\ell=0}^{\delta_1+\cdots +\delta_r} (\log x)^{\ell}
\sum_{d_1,\ldots, d_r\le x} \frac{g(d_1,\ldots, d_r) C_{\ell}(\log d_1,\ldots, \log d_r)}{d_1^{a_1}\cdots d_r^{a_r}} \\
 & = x^{a_1+\cdots +a_r}
\sum_{\ell=0}^{\delta_1+\cdots +\delta_r} d_{\ell} (\log x)^{\ell}
\nonumber\\
 &\ \ +\ \ x^{a_1+\cdots +a_r} \sum_{\ell=0}^{\delta_1+\cdots +\delta_r} (\log x)^{\ell}
\sideset{}{'} \sum_{d_1,\ldots, d_r} \frac{g(d_1,\ldots, d_r) C_{\ell}(\log d_1,\ldots, \log d_r)}{d_1^{a_1}\cdots d_r^{a_r}}, \nonumber
\end{align}
where
$$d_{\ell}:= \sum_{d_1 ,\ldots, d_r=1 }^\infty \frac{g(d_1,\ldots, d_r) C_{\ell}(\log d_1,\ldots, \log d_r)}{d_1^{a_1}\cdots d_r^{a_r}}
$$
and where $\sum^{'}$  means that there is at least one $j$ ($1\leq j\leq r$) such that $d_j>x.$ Without loss of generality, we suppose $d_r>x.$

Suppose $\varepsilon>0$ is sufficiently small and we have $\log n\ll n^{\varepsilon}.$ Thus
\begin{align} \label{M_x_estimate_cont}
&x^{a_1+\cdots +a_r}
\sum_{\ell=0}^{\delta_1+\cdots +\delta_r} (\log x)^{\ell}
\sideset{}{'} \sum_{d_1,\ldots, d_r} \frac{g(d_1,\ldots, d_r) C_{\ell}(\log d_1,\ldots, \log d_r)}{d_1^{a_1}\cdots d_r^{a_r}}\\
&\ll x^{a_1+\cdots +a_r}
\sum_{\ell=0}^{\delta_1+\cdots +\delta_r} (\log x)^{\ell} \sum_{d_1,\ldots, d_{r-1}= 1}^{\infty} \sum_{d_r>x} \frac{|g(d_1,\ldots, d_r)|
 d_1^{\delta_1 \varepsilon}\cdots d_r^{\delta_r \varepsilon}}{d_1^{a_1}\cdots d_r^{a_r}}\nonumber\\
 &\ll x^{a_1+\cdots +a_r}
\sum_{\ell=0}^{\delta_1+\cdots +\delta_r} (\log x)^{\ell} \sum_{d_1,\ldots, d_{r-1}= 1 }^{\infty} \sum_{d_r>x} \frac{|g(d_1,\ldots, d_r)|
 d_1^{\delta_1 \varepsilon} \cdots d_r^{\delta_r \varepsilon}}{d_1^{a_1}\cdots d_{r-1}^{a_{r-1}} d_r^{b_r}} \times\frac{1}{d_r^{a_r-b_r}}
 \nonumber\\
 &\ll x^{a_1+\cdots +a_r-(a_r-b_r)}
\sum_{\ell=0}^{\delta_1+\cdots +\delta_r} (\log x)^{\ell} \sum_{d_1 ,\ldots, d_{r}= 1 }^{\infty}  \frac{ |g(d_1,\ldots, d_r)|}{d_1^{a_1-\delta_1 \varepsilon}\cdots d_{r-1}^{ a_{r-1}-\delta_{r-1}\varepsilon} d_r^{ b_r-\delta_r\varepsilon}  }\nonumber\\
  &\ll x^{a_1+\cdots +a_r-(a_r-b_r)} (\log x)^{\delta_1+\cdots +\delta_r} \ll x^{a_1+\cdots +a_r- \Delta} (\log x)^{\delta_1+\cdots +\delta_r},
  \nonumber
\end{align}
by using condition (ii) again.

From \eqref{M_x} and \eqref{M_x_estimate_cont} we get
\begin{equation} \label{we_get}
M(x)=x^{a_1+\cdots +a_r}
\sum_{\ell=0}^{\delta_1+\cdots +\delta_r} d_{\ell} (\log x)^{\ell} + O(x^{a_1+\cdots +a_r- \Delta} (\log x)^{\delta_1+\cdots + \delta_r}).
\end{equation}

Now Theorem \ref{Th_gen_convo} follows from \eqref{sum_h}, \eqref{easy}, \eqref{contrib} and \eqref{we_get}.
\end{proof}

\begin{proof}[Proof of Theorem {\rm \ref{Th_tau_r}}] Apply Theorem \ref{Th_gen_convo} in the case $f_j(n)=\tau(n)$, $a_j=1$,
$b_j=\theta+\varepsilon$, $\delta_j=1$ ($1\le j\le r$) by using Proposition \ref{Prop_Dir_ser_tau_prod} and Proposition
\ref{Prop_Dir_ser_tau_lcm} for $k=1$.
\end{proof}

\begin{proof}[Proof of Theorem {\rm \ref{Th_tau_star_r}}] Apply Theorem \ref{Th_gen_convo} in the case $f_j(n)=\tau^*(n)$, $a_j=1$,
$b_j=1/2+\varepsilon$, $\delta_j=1$ ($1\le j\le r$) by using Proposition \ref{Prop_Dir_ser_tau_star_prod}.
\end{proof}

\begin{proof}[Proof of Theorem {\rm \ref{Th_tau_1_k_gcd_r}}] Let $f$ be an arbitrary arithmetic function. Then
\begin{equation*}
\sum_{n_1,\ldots,n_r\le x} f((n_1,\ldots,n_r)) = \sum_{n_1,\ldots,n_r\le x} \sum_{d\mid (n_1,\ldots,n_r)} (\mu*f)(d)
\end{equation*}
\begin{equation*}
= \sum_{dj_1,\ldots, dj_r\le x} (\mu*f)(d) = \sum_{d\le x}(\mu*f)(d)  \lfloor x/d \rfloor^r.
\end{equation*}

In the case $f=\tau_{1,k}$ we have $(\mu*\tau_{1,k})(n)=1$ ($n=m^k$), $0$ (otherwise), and deduce
\begin{equation*}
\sum_{n_1,\ldots,n_r\le x} \tau_{1,k}((n_1,\ldots,n_r)) = \sum_{m^k\le x} \left( \frac{x}{m^k}+O(1)\right)^r
\end{equation*}
\begin{equation*}
= \sum_{m\le x^{1/k}} \left(\frac{x^r}{m^{kr}}+O\left(\frac{x^{r-1}}{m^{k(r-1)}}\right)\right)
\end{equation*}
\begin{equation*}
= x^r \sum_{m=1}^{\infty} \frac1{m^{kr}}+ O\left(x^r \sum_{m>x^{1/k}} \frac1{m^{kr}}\right)+
O\left(x^{r-1} \sum_{m\le x^{1/k}} \frac1{m^{k(r-1)}}\right),
\end{equation*}
and \eqref{tau_1_k_gcd_r} is obtained by standard elementary estimates.
\end{proof}

\section{Acknowledgements}
The authors thank the referee for useful comments and suggestions, which improved the paper.

This work was supported by the National Key Basic Research Program of
China (Grant No. 2013CB834201). Part of this paper was completed while the first author visited the China University of Mining and Technology
in November 2016.

\medskip

\noindent L\'aszl\'o T\'oth \\
Department of Mathematics \\
University of P\'ecs \\
Ifj\'us\'ag \'utja 6, H-7624 P\'ecs, Hungary
\\ E-mail: {\tt ltoth@gamma.ttk.pte.hu}

\medskip \medskip

\noindent Wenguang Zhai \\
Department of Mathematics \\
China University of Mining and Technology \\
Beijing 100083, China \\
E-mail: {\tt zhaiwg@hotmail.com}

\end{document}